\documentclass[11pt, pdflatex]{amsart}

\usepackage[latin1]{inputenc}
\usepackage[english]{babel}

\usepackage{amsmath, amsthm, amsfonts, amssymb}
\usepackage{graphicx}
\usepackage{caption}



\newcommand{\abs}   [1]{\left \vert #1 \right \vert}

\newcommand{\norm}  [2]{\left \Vert #1 \right \Vert_{#2}}

\newcommand{\bo}    [0]{{\partial \Omega}}

\captionsetup[figure]{labelfont={bf}, textfont=normalfont, justification=centering}

\makeatletter
\newtheorem*{rep@theorem}{\rep@title}
\newcommand{\newreptheorem}[2]{%
\newenvironment{rep#1}[1]{%
 \def\rep@title{#2 \ref{##1}}%
 \begin{rep@theorem}}%
 {\end{rep@theorem}}}
\makeatother

\newtheorem{theorem}{Theorem}[section]

\newtheorem{lemma}[theorem]{Lemma}

\newtheorem{remark}[theorem]{Remark}

\newreptheorem{theorem}{Theorem}

\title[Reconstruction of rough potentials in the plane]{Reconstruction of rough potentials in the plane}
\author{Jorge Tejero}

\begin{document}
\maketitle

\begin{abstract}
We provide a reconstruction scheme for complex-valued potentials in $H^s (\mathbb{R}^2)$ for $s > 0$. The procedure extends the method of Bukhgeim relying on quadratic exponential solutions. We also see how the new reconstruction formulas proposed improve the convergence on a set of examples.
\end{abstract}

\section{Introduction}

Consider the time-independent Schr\"odinger equation with potential $q$ in a bounded domain $\Omega$ in $\mathbb{R}^2$. Given a function $f$ defined on the boundary, the Dirichlet problem for this equation is to determine $u$ satisfying 
\begin{align} \label{dirichletProblem}
\left\lbrace
\begin{array}{l}
\ \, \Delta u = q u \quad \text{in } \Omega \\
u |_\bo = f .
\end{array} \right.
\end{align}
Provided this can be solved uniquely, we can formally define the Dirichlet to Neumann (DtN) map $\Lambda_q$ by
\begin{align*}
\Lambda_q [f] = \frac{\partial u}{\partial \nu}
\end{align*}
where $\nu$ is the outward normal unitary vector to the boundary.

Our goal is to recover the potential inside $\Omega$ from the DtN map. This problem dates back to Gel'fand in~\cite{G54} and is closely related to the inverse conductivity problem proposed by Calder\'on in~\cite{C80}. Some relevant works in higher dimensions are \cite{SU87, N88, HT13, CR16}. 

The two-dimensional case differs significantly to the higher dimensional one, and different techniques are used to address the problem.
In~\cite{N96} Nachman introduced the $\overline{\partial}$-method to prove uniqueness 
for conductivities in $W^{2,p}$ with $p>1$,
and gave a reconstruction procedure. 
In~\cite{BU97} Brown and Uhlmann reduced the conductivity equation to a first order system and proved uniqueness for conductivities with one derivative.
In~\cite{AP06}, combining the $\overline{\partial}$-method with the theory of 
quasi-conformal maps, Astala and P\"{a}iv\"{a}rinta gave a reconstruction procedure for $L^\infty$ conductivities. 

There was little progress for general potentials in the plane until \cite{B08}, where Bukhgeim introduced a new method, proving uniqueness for $C^1$-potentials. In this work
Bukhgeim used solutions of the form
\begin{align} \label{bukhgeimSolutions}
u_{\lambda, x} = e^{i \lambda \psi_x} (1 + w_{\lambda, x}), \quad \quad \psi_x (z) = \tfrac{1}{2}(z_1 - x_1 + i z_2 -  i x_2 )^2 ,
\end{align}
where $w_{\lambda, x}$ is small in some suitable sense. These solutions provide the following (formal) recovery formula for smooth enough potentials
\begin{align} \label{standardRecoveryFormula}
\lim_{\lambda \to \infty} \frac{\lambda}{\pi} \int_{\bo} e^{i \lambda \overline{\psi}_x} (\Lambda_q - \Lambda_0) [u_{\lambda, x}] = q(x).
\end{align}

This quadratic phase approach has since been extended by several authors; we list a few here. In \cite{BIY15}, Bl\r{a}sten, Imanuvilov and Yamamoto proved uniqueness
for potentials in $L^p$ with $p>2$, and gave stability estimates in the $L^2$ norm for potentials in $H^s$ with $s > 0$. In \cite{AFR16}, Astala, Faraco and Rogers gave a reconstruction procedure for potentials in~$H^{1/2}$ and proved that their procedure fails with less regularity. In \cite{LTV17} Lakshtanov, Tejero and Vainberg used the quadratic phase solutions in the first order system introduced in \cite{BU97} to prove uniqueness for complex-valued Lipschitz conductivities.

Using a different approach, Lakshtanov, Novikov and Vainberg \cite{LNV16} also considered general potentials, providing a reconstruction scheme for generic real-valued bounded potentials. Their procedure relies on Faddeev's scattering solutions and allows to recover almost each potential, in the sense of their Remark 4.1. This work has been later extended in \cite{LV16} to deal with generic real-valued potentials in $L^p$, with $p>1$.

In this paper we exploit the fact that the reconstruction formula \eqref{standardRecoveryFormula} commutes, in some sense, with taking averages. This allows us to reduce the problem to the recovery of smoother potentials.

Let $\varphi$ be a Schwartz function, supported in the unit ball, satisfying
\begin{align*}
\varphi (x) \geq 0\quad \text{and}\quad \int_{\mathbb{R}^2} \varphi(x) \, dx = 1, 
\end{align*}
and we write $\varphi_\sigma (x) := \sigma^{-2} \, \varphi(\sigma^{-1} x)$ for $\sigma > 0$. For $x\in \Omega$ we denote the boundary information at frequency $\lambda$ by
\begin{align*}
BI_\lambda (x) := \frac{\lambda}{\pi} \Big \langle (\Lambda_q - \Lambda_0) [u_{\lambda, x} |_\bo ], e^{i \lambda \overline{\psi}_{x}} |_\bo \Big \rangle 
\end{align*}
where $u_{\lambda, x}$ are Bukhgeim solutions of the form \eqref{bukhgeimSolutions}.
We refer to \cite[Theorem 1.1]{AFR16} for how to compute the values of the Bukhgeim solutions on the boundary from the DtN map. 
The following result is proved in Section \ref{proofMain}.
\begin{theorem} \label{mainTheorem}
Let $s > 0$, let $q \in H^s(\mathbb{R}^2)$ be a complex-valued potential supported in a bounded Lipschitz domain $\Omega\subset \mathbb{R}^2$  and let $\sigma = \lambda^{-1/4}$. Then 
\begin{align*}
\lim_{\lambda \to \infty} \, \varphi_\sigma \ast BI_\lambda(x) = q(x)\quad \mathrm{a.e.}\ x\in\Omega.
\end{align*}
\end{theorem}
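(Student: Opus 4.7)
The plan is to write $BI_\lambda$ as a linear quadratic-phase convolution of $q$ plus the Bukhgeim remainder, and to show separately that (i) convolving the linear part with $\varphi_\sigma$ converges to $q$ a.e., and (ii) the averaged remainder vanishes.

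First, Alessandrini's identity applied to the Bukhgeim solutions (as in the interior formulation recalled in \cite[Theorem 1.1]{AFR16}) expresses the boundary pairing as
\begin{align*}
BI_\lambda(x) = \frac{\lambda}{\pi}\int_\Omega q(z)\, e^{i\lambda[(z_1-x_1)^2-(z_2-x_2)^2]}\bigl(1+w_{\lambda,x}(z)\bigr)\,dz = (K_\lambda\ast q)(x) + R_\lambda(x),
\end{align*}
where $K_\lambda(y):=\tfrac{\lambda}{\pi}e^{i\lambda(y_1^2-y_2^2)}$ is the quadratic-phase kernel and $R_\lambda$ collects the $w_{\lambda,x}$-term. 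A Fresnel computation gives $\widehat{K_\lambda}(\xi)=e^{i(\xi_2^2-\xi_1^2)/(4\lambda)}$, so $K_\lambda\ast(\cdot)$ is an $L^2$-isometry whose Fourier symbol converges pointwise to $1$. Writing $q_\sigma:=\varphi_\sigma\ast q$ and using commutativity of convolution, $\varphi_\sigma\ast BI_\lambda = K_\lambda\ast q_\sigma + \varphi_\sigma\ast R_\lambda$; this identity is the precise sense of the ``commutation with averages'' alluded to in the introduction.

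For the main piece I would split $K_\lambda\ast q_\sigma - q = (K_\lambda-\delta)\ast q_\sigma + (q_\sigma-q)$. Lebesgue differentiation gives $q_\sigma(x)\to q(x)$ a.e.\ in $\Omega$, while the elementary bound $|e^{it}-1|\le 2|t|^{s/2}$ (for $s\in[0,2]$) combined with Plancherel yields
\begin{align*}
\|(K_\lambda-\delta)\ast q_\sigma\|_{L^2(\mathbb{R}^2)} \lesssim \lambda^{-s/2}\|q\|_{H^s},
\end{align*}
so the main piece converges to $q$ in $L^2_{\mathrm{loc}}$; a.e.\ convergence then follows along a dyadic subsequence (where the rate becomes summable), with the maximal function of $\varphi_\sigma$ controlling oscillations between dyadic scales.

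The decisive step is $\varphi_\sigma\ast R_\lambda\to 0$ a.e. Bukhgeim's remainder $w_{\lambda,x}$ solves a $\overline{\partial}$-type equation with source involving $q$ and the Bukhgeim phase, and the bilinear estimates of \cite{BIY15,AFR16} control it only in an averaged $L^2$-in-$x$ sense below the $H^{1/2}$-threshold at which pointwise reconstruction is known to fail. Here the averaging by $\varphi_\sigma$ is used to push the averaged bound to a pointwise one: after Fubini and a completion of squares in the translation variable of the mollifier, the integrand of $\varphi_\sigma\ast R_\lambda(x)$ is rearranged so that $q$ is effectively replaced by the smoother potential $q_\sigma$, after which the existing Bukhgeim bilinear estimates apply. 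Tracking exponents, the regularisation gain $\sigma^{2s}$ has to be balanced against the stationary-phase loss $(\sigma^2\lambda)^{-1}=\lambda^{-1/2}$, and $\sigma=\lambda^{-1/4}$ is exactly the break-even scale at which both contributions are $o(1)$ for every $s>0$.

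The main obstacle is precisely this quantitative estimate on the averaged remainder. The nonlinear dependence of $w_{\lambda,x}$ on $x$ (through the phase in its defining $\overline{\partial}$-equation) prevents a literal commutation of $\varphi_\sigma$ with the Bukhgeim integral operator, so one must verify that the interaction of the phase with $\varphi_\sigma$ genuinely smooths $q$ inside $R_\lambda$ rather than producing the $\sigma^{-2}$ loss of a naive Young-type bound. The remaining ingredients---the Fresnel calculation, Lebesgue differentiation, and the $L^2$ analysis of the main term---are by now standard; the novelty is entirely in how the averaging interacts with the Bukhgeim scheme.
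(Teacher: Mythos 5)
Your decomposition is the same as the paper's, $\varphi_\sigma\ast BI_\lambda = T^\lambda[q_\sigma] + \varphi_\sigma\ast T^\lambda_w[q]$, but you have located the difficulty in the wrong term, and the step you yourself flag as ``the main obstacle'' --- upgrading an allegedly averaged-in-$x$ bound on the Bukhgeim remainder to a pointwise one via the mollifier --- is left unproven and is in fact unnecessary. The remainder requires no averaging at all: \cite[Theorem 4.1]{AFR16}, reproduced as Theorem \ref{remainderSize}, already gives the uniform estimate $\sup_{x\in\Omega}|T^\lambda_w[q](x)|\le C\,\lambda^{-s}\,\norm{q}{\dot H^s}^2$ for every $0<s<1$ (and compactly supported $q\in H^s$ with $s\ge 1$ lies in such a space), so $|\varphi_\sigma\ast T^\lambda_w[q]|\le \norm{T^\lambda_w[q]}{L^\infty}\to 0$ trivially. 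Your premise that below the $H^{1/2}$ threshold the remainder is controlled only in an $L^2$-in-$x$ sense is incorrect: the $H^{1/2}$ threshold in \cite{AFR16} concerns the \emph{main} term, i.e.\ the failure of $T^\lambda[q](x)\to q(x)$ pointwise for rough $q$. Consequently the exponent balancing you offer to explain $\sigma=\lambda^{-1/4}$ (``$\sigma^{2s}$ against $(\sigma^2\lambda)^{-1}$'') has no basis --- $\sigma$ never enters the remainder estimate --- and your sketched mechanism (Fubini plus completing squares so that $q$ is ``effectively replaced by $q_\sigma$'' inside $R_\lambda$) cannot be carried out as stated, precisely because $w_{\lambda,x}$ depends on $x$; the commutation $\varphi_\sigma\ast T^\lambda[q]=T^\lambda[q_\sigma]$ holds only for the $w$-free part.

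The place where the averaging genuinely does the work is the main term, which you treat as routine, and there your argument is the fragile one. The $L^2$ rate $\lambda^{-s/2}\norm{q}{\dot H^s}$ plus ``a.e.\ along a dyadic subsequence, with the maximal function of $\varphi_\sigma$ controlling oscillations between scales'' does not obviously yield the full limit $\lambda\to\infty$: between dyadic scales the $\lambda$-dependence sits in the oscillatory kernel $K_\lambda$, not in the mollifier, so the maximal-function control you invoke does not apply as stated. The paper instead argues pointwise: combine the commutation identity with the smoothing bound $\norm{q_\sigma}{\dot H^{s'}}\le C\sigma^{s-s'}\norm{q}{\dot H^s}$ for $s'\in(1,3)$ (Lemma \ref{convolutionSize}) and the stationary-phase estimate $|T^\lambda[q_\sigma](x)-q_\sigma(x)|\le C\lambda^{(1-s')/2}\norm{q_\sigma}{\dot H^{s'}}$ (Lemma \ref{statPhaseHs}); with $\sigma=\lambda^{-1/4}$ this error is $C\lambda^{(2-s-s')/4}\norm{q}{\dot H^s}\to 0$ upon choosing $s'>\max\{1,2-s\}$, which is the actual origin of the exponent $-1/4$. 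Everything then reduces to $q_\sigma(x)\to q(x)$, which holds off a set of Hausdorff dimension at most $2-2s$ by \cite[Lemma A.1]{BBCR11}, giving the a.e.\ statement (and more). To repair your proposal: drop the ``averaged-to-pointwise'' program for the remainder and simply quote the sup bound of Theorem \ref{remainderSize}, and replace the $L^2$/subsequence treatment of the main term by the pointwise estimates above (or else supply the missing between-scales argument).
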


We also present a different reconstruction formula that also relies on an averaging procedure. In this case we average over rotatations of the boundary inputs and over a range of the frequencies $\lambda$. We are able to prove a result similar to Theorem \ref{mainTheorem} for this formula, but requiring some additional local regularity. This formula is of interest for the purpose of numerical implementation. We perform some numerical computations comparing the convergence of the main term of the standard Bukhgeim formula with that of the averaged formulas. We see that both are useful, and can be combined together. 

In Section \ref{preliminaries} we introduce some definitions and state some preliminary results. In Section \ref{proofMain} we prove Theorem \ref{mainTheorem} and we give a bound for dimension of the set of points where the recovery fails and an estimate for the decay rate of the error of the reconstruction. In Section \ref{polarAveraging} we introduce the reconstruction formula relying on the rotation and frequency average and prove its convergence under some additional hypothesis. In Section \ref{numericalTests} we perform some numerical tests to study how the averaging procedures improve convergence of the reconstruction formula to the potential.

\section{Preliminaries} \label{preliminaries}

Suppose that $0$ is not a Dirichlet eigenvalue for the Hamiltonian $q - \Delta$. Then, for each $f \in H^{1/2} (\bo)$ there exists a unique solution $u \in H^1 (\Omega)$ to the dirichlet problem \eqref{dirichletProblem}, and the DtN map can be defined by 
\begin{align*}
\langle \Lambda_q[f], v\vert_{\partial \Omega} \rangle := \int_\Omega q \, u \, v + \nabla u \cdot \nabla v 
\end{align*}
for any $v \in H^1(\Omega)$. By integration by parts, this definition coincides with that of the introduction whenever the boundary and solutions have sufficient regularity. Taking $v$ harmonic we obtain Alessandrini's identity
\begin{align} \label{ale}
\langle (\Lambda_q - \Lambda_0)[f], v\vert_{\partial \Omega} \rangle = \int_\Omega q \, u \, v.
\end{align}

Let $\phi_x(z) := \psi_x(z) + \overline{\psi_x(z)} = (z_1 - x_1)^2 - (z_2 - x_2)^2$ and let 
\begin{align} \label{tOperator}
T^\lambda_g [f] (x) := \frac{\lambda}{\pi} \int_\Omega e^{i \lambda \phi_x (z)} \, f(z) \, g_{\lambda, x}(z) \, dz.
\end{align}
We also use the notation $T^\lambda [f] := T^\lambda_1 [f]$ for simplicity.
The following two results are due to Astala, Faraco and Rogers, and their proof can be found in \cite{AFR16}. The first result allows to bound the remainder term in the reconstruction, while the second one is a stationary phase type statement, suitably adapted to our purposes; we include the proof for the second as the original result is stated in terms of inhomogeneus Sobolev spaces.

\begin{theorem} \cite[Theorem 4.1]{AFR16} \label{remainderSize}
Let $q \in \dot{H}^s (\mathbb{R}^2)$ with $0 < s < 1$. Then 
\begin{align*}
\lim_{\lambda \to \infty} T^\lambda_w [q] (x) = 0, \quad x \in \Omega.
\end{align*}
Moreover, if $\lambda \geq \left( 1 + c\norm{q}{\dot{H}^s} \right)^{\max \lbrace \frac{1}{2s}, \frac{1}{1-s}\rbrace}$, then
\begin{align*}
\sup_{x \in \Omega} |T^\lambda_w [q](x)| \leq C \, \lambda^{-s} \, \norm{q}{\dot{H}^s}^2.
\end{align*}
\end{theorem}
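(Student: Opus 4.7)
The plan is to control $T^\lambda_w[q](x)=\frac{\lambda}{\pi}\int_\Omega e^{i\lambda\phi_x(z)}\,q(z)\,w_{\lambda,x}(z)\,dz$ by first obtaining an a priori $L^2$ bound on the Bukhgeim remainder $w_{\lambda,x}$ and then exploiting additional oscillatory cancellation of $T^\lambda$ via a Littlewood--Paley decomposition of $q$.

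First I would derive an integral equation for $w_{\lambda,x}$. Substituting $u_{\lambda,x}=e^{i\lambda\psi_x}(1+w_{\lambda,x})$ into $\Delta u=qu$ and using that $\psi_x$ is holomorphic in $z$ (so $\overline{\partial}\psi_x=0$), a short computation gives
\[
\partial\bigl(e^{i\lambda\psi_x}\overline{\partial}w_{\lambda,x}\bigr)=\tfrac14\,e^{i\lambda\psi_x}\,q\,(1+w_{\lambda,x}).
\]
Inverting $\partial$ and $\overline{\partial}$ with the Cauchy transform yields $w_{\lambda,x}=S_{\lambda,x}[q\,(1+w_{\lambda,x})]$, where $S_{\lambda,x}$ is a composition of the Cauchy transform with an $e^{\pm i\lambda\psi_x}$-conjugated Cauchy transform. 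Bukhgeim's basic estimate, proved by Plancherel after linearising the quadratic phase, produces the key gain $\norm{S_{\lambda,x}f}{L^2(\Omega)}\lesssim \lambda^{-1/2}\norm{f}{L^2(\Omega)}$, uniformly in $x\in\Omega$.

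Next I would obtain an a priori bound $\norm{w_{\lambda,x}}{L^2(\Omega)}\lesssim \lambda^{-s/2}\norm{q}{\dot{H}^s}$ by solving the integral equation via a Neumann series. Because the iteration requires multiplying by $q$ inside $L^2$, the standard device is to split $q=P_{\le N}q+P_{>N}q$ with a Littlewood--Paley projector: Bernstein in two dimensions gives $\norm{P_{\le N}q}{L^\infty}\lesssim N^{1-s}\norm{q}{\dot{H}^s}$ while $\norm{P_{>N}q}{L^2}\lesssim N^{-s}\norm{q}{\dot{H}^s}$. Treating $P_{\le N}q$ as a bounded multiplier and $P_{>N}q$ as an $L^2$ perturbation, and choosing $N$ as an appropriate power of $\lambda$, produces the desired bound; the two exponents $1/(2s)$ and $1/(1-s)$ in the smallness hypothesis on $\lambda$ correspond precisely to the two smallness conditions needed to close the Neumann series in these two regimes.

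With this $L^2$ control in hand, a direct Cauchy--Schwarz in $T^\lambda_w[q]=T^\lambda[q\,w_{\lambda,x}]$ would give only $\lambda^{1-s/2}$, so instead I would iterate the integral equation once and decompose
\[
T^\lambda_w[q](x)=T^\lambda[q\,S_{\lambda,x}q](x)+T^\lambda[q\,S_{\lambda,x}(q\,w_{\lambda,x})](x).
\]
The second term is controlled by Cauchy--Schwarz together with the $L^2$-bound on $w_{\lambda,x}$ and the $\lambda^{-1/2}$-smoothing of $S_{\lambda,x}$, and is already of the claimed size $\lambda^{-s}\norm{q}{\dot{H}^s}^2$. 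The first, bilinear term is the technical heart: rewriting it as a pairing $\langle S^*_{\lambda,x}(e^{i\lambda\phi_x}q),q\rangle$ and applying the same Littlewood--Paley split plus Plancherel reduces it to oscillatory estimates for a composition of twisted Cauchy transforms, which gains an additional factor of $\lambda^{-1}$ relative to the prefactor $\lambda/\pi$. The main obstacle is controlling this bilinear oscillatory form uniformly in $x\in\Omega$: both factors of $q$ are only in $\dot{H}^s$, so one must carefully balance the Littlewood--Paley decomposition against the gain from the twisted Cauchy transform to extract the full $\lambda^{-s}$, and one must do so without losing uniformity in the $x$-dependence of $\psi_x$ and $S_{\lambda,x}$. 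Once the uniform estimate is in place, the pointwise convergence $T^\lambda_w[q](x)\to 0$ follows from the density of Schwartz functions in $\dot{H}^s(\mathbb{R}^2)$ combined with a standard stationary-phase computation in the smooth case.
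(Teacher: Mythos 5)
First, a point of context: this paper never proves Theorem \ref{remainderSize} -- it is quoted verbatim from \cite[Theorem 4.1]{AFR16}, and the text explicitly defers the proof there (only the proof of Lemma \ref{statPhaseHs} is reproduced, for the homogeneous-norm reformulation). So your attempt must be measured against the argument in \cite{AFR16}. In outline your skeleton does match it: the integral equation $w_{\lambda,x}=\tfrac14\,\overline{\partial}^{-1}\big[e^{-i\lambda\phi_x}\,\partial^{-1}[e^{i\lambda\phi_x}q(1+w_{\lambda,x})]\big]$ is the correct starting point, the solution is produced by a Neumann series whose convergence threshold is the natural source of the exponent $\max\lbrace\frac{1}{2s},\frac{1}{1-s}\rbrace$, and the leading contribution to $T^\lambda_w[q]$ is the bilinear oscillatory form $\frac{\lambda}{\pi}\int e^{i\lambda\phi_x}q\,S_{\lambda,x}[q]$.

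Beyond the skeleton, however, there are genuine gaps. (1) Your $L^2$ framework cannot close. Both in the Neumann series and in your bound for the second term you must multiply an $L^2$-type function by $q$; since $q\in\dot{H}^s$ with $s<1$ is unbounded, the product $P_{>N}q\cdot f$ with $f\in L^2$ lies only in $L^1$, and the Cauchy transform does not map $L^1$ into $L^2$ (only into $L^{2,\infty}$), so "treating $P_{>N}q$ as an $L^2$ perturbation" is not a meaningful operator statement and the iteration degrades at every step. This is precisely why \cite{AFR16} run the iteration with estimates for the twisted transforms in fractional Sobolev norms together with multiplication estimates, rather than in $L^2$. (2) The claim that $T^\lambda[q\,S_{\lambda,x}(q\,w_{\lambda,x})]$ "is already of the claimed size $\lambda^{-s}\norm{q}{\dot{H}^s}^2$" is arithmetically false on your own numbers: Cauchy--Schwarz discards the oscillation of $e^{i\lambda\phi_x}$ in the outer integral, and the factors you retain give at best $\lambda\cdot\lambda^{-1/2}\cdot\lambda^{-s/2}=\lambda^{(1-s)/2}$, which grows for $s<1$ (and is cubic, not quadratic, in $\norm{q}{\dot{H}^s}$, besides requiring a bound on $\norm{q\,w_{\lambda,x}}{L^2}$ that is unavailable). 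Every term of the expansion, not only the first, must be treated as a full oscillatory integral. (3) The bilinear term, which you correctly call the technical heart, is exactly the content of the theorem, and your proposal only names the tools and the obstacle (uniformity in $x$, balancing the Littlewood--Paley split against the twisted-transform gain) without carrying out any estimate; so the core of the proof is missing. Finally, the closing density argument is both unnecessary and problematic: $w_{\lambda,x}$ depends nonlinearly on $q$, so one cannot pass to Schwartz approximations in any obvious way, and no such argument is needed -- for fixed $q$ the threshold on $\lambda$ is eventually exceeded, and then the quantitative bound itself gives $\sup_{x\in\Omega}|T^\lambda_w[q](x)|\leq C\lambda^{-s}\norm{q}{\dot{H}^s}^2\to 0$, so the first assertion of the theorem is an immediate corollary of the second.
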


\begin{lemma} \cite[Lemma 4.2]{AFR16} \label{statPhaseHs}
Let $q \in H^s (\mathbb{R}^2)$ with $1<s<3$. Then
\begin{align*}
\abs{ T^\lambda[q](x) - q(x)} \leq C \lambda^{\frac{1-s}{2}} \norm{q}{\dot{H}^s}, \quad x \in \Omega. 
\end{align*}
\end{lemma}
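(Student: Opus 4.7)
The plan is to reduce the statement to a Fourier multiplier estimate by evaluating the Gaussian oscillatory integral exactly. Since $q$ can be assumed compactly supported in $\Omega$ (or extended by zero), the substitution $y=z-x$ gives
\begin{equation*}
T^\lambda[q](x) = \frac{\lambda}{\pi}\int_{\mathbb{R}^2} e^{i\lambda(y_1^2 - y_2^2)}\, q(x+y)\,dy.
\end{equation*}
Inserting the Fourier inversion formula for $q$ and exchanging the order of integration---justified by a Gaussian damping $e^{-\epsilon\abs{y}^2}$, since the phase is not absolutely integrable---the inner integral factors into two one-dimensional Fresnel integrals. Because the quadratic phase has signature zero, the $e^{+i\pi/4}$ produced by the $y_1$-integral and the $e^{-i\pi/4}$ produced by the $y_2$-integral cancel, yielding the clean multiplier identity
\begin{equation*}
T^\lambda[q](x) = \frac{1}{(2\pi)^2} \int_{\mathbb{R}^2} \hat q(\xi)\, e^{ix\cdot\xi}\, e^{-i(\xi_1^2 - \xi_2^2)/(4\lambda)}\, d\xi.
\end{equation*}
Thus $T^\lambda$ acts on $q$ as the Fourier multiplier $m_\lambda(\xi):=e^{-i(\xi_1^2-\xi_2^2)/(4\lambda)}$.

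Subtracting $q(x)$ from both sides reduces the problem to bounding the multiplier $m_\lambda -1$. Combining the two elementary bounds $\abs{m_\lambda(\xi)-1}\leq 2$ and $\abs{m_\lambda(\xi)-1}\leq \abs{\xi}^2/(4\lambda)$ gives $\abs{m_\lambda(\xi)-1}\leq \min(2,\abs{\xi}^2/(4\lambda))$. Pairing $\abs{\hat q(\xi)}$ with $\abs{\xi}^s$ via Cauchy-Schwarz then yields
\begin{equation*}
\abs{T^\lambda[q](x)-q(x)} \leq C\,\norm{q}{\dot{H}^s}\left(\int_{\mathbb{R}^2}\frac{\abs{m_\lambda(\xi)-1}^2}{\abs{\xi}^{2s}}\,d\xi\right)^{1/2}.
\end{equation*}

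The last step is to split this integral at the balance scale $\abs{\xi}\sim\sqrt{\lambda}$, using $\abs{\xi}^2/(4\lambda)$ on the ball $\abs{\xi}\leq\sqrt{\lambda}$ and the constant $2$ outside. A polar computation shows the low-frequency contribution is of order $\lambda^{-2}\cdot\lambda^{3-s}=\lambda^{1-s}$, with integrability at the origin demanding $s<3$, while the high-frequency contribution is also of order $\lambda^{1-s}$, with integrability at infinity demanding $s>1$. The two pieces match, and taking square roots delivers the claimed rate $\lambda^{(1-s)/2}$. The only real obstacle is the Fresnel computation and its justification; once the multiplier identity is in hand the rest is a routine frequency-splitting estimate, and the $x$-independence of the final bound is automatic since $x$ enters only through the harmless phase $e^{ix\cdot\xi}$.
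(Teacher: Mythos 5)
Your proof is correct and follows essentially the same route as the paper: identify $T^\lambda$ as the Fourier multiplier $e^{-i(\xi_1^2-\xi_2^2)/(4\lambda)}$, apply Cauchy--Schwarz against $\norm{q}{\dot H^s}$, and split frequencies to see where $1<s<3$ is needed. The only cosmetic difference is that you split directly at $\abs{\xi}\sim\sqrt{\lambda}$ while the paper first rescales $\xi\mapsto\sqrt{\lambda}\,\xi$ and then splits at the unit disk; these are the same computation.
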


\begin{proof}
By the Fourier inversion formula and the Cauchy-Schwarz inequality,
\begin{align*}
\left|T^\lambda [q] (x)  - q(x)\right|
&= \bigg| \frac{\lambda}{\pi} \bigg( q \ast e^{i \lambda (z_1^2 - z_2^2)} \Big) (x) - q(x) \bigg| \\
&= \frac{1}{4\pi^2} \bigg| \int e^{ix \cdot \xi} \, \widehat{q}(\xi) \, \Big( e^{-i \frac{1}{\lambda} (\xi_1^2 - \xi_2^2)} - 1 \Big) \, d\xi \bigg| \\
&\leq \norm{q}{\dot{H}^{s}} \bigg( \int \frac{\big| e^{-i \frac{1}{\lambda} (\xi_1^2 - \xi_2^2)} - 1 \big|^2}{|\xi|^{2s}} \, d\xi \bigg)^{1/2} \\
&= \lambda^{\frac{1-s}{2}} \norm{q}{\dot{H}^{s}} \bigg( \int \frac{2 - 2 \cos (\xi_1^2 - \xi_2^2)}{|\xi|^{2s}} \, d\xi \bigg)^{1/2} \\
&= 2 \lambda^{\frac{1-s}{2}} \norm{q}{\dot{H}^{s}} \bigg( \int \frac{\sin^2 \big( \tfrac{1}{2}(\xi_1^2 - \xi_2^2) \big)}{|\xi|^{2s}} \, d\xi \bigg)^{1/2} \\
&\leq 2 \lambda^{\frac{1-s}{2}} \norm{q}{\dot{H}^{s}} \bigg( \int_{\mathbb{D}} \frac{1}{|\xi|^{2(s-2)}} \, d\xi + \int_{\mathbb{R}^2 \setminus \mathbb{D}} \frac{1}{|\xi|^{2s}} \, d\xi \bigg)^{1/2},
\end{align*}
where we have used the trigonometric identity $2 \sin^2 \theta = 1 - \cos 2 \theta$ and the fact that $\sin \theta \leq |\theta|$.
\end{proof}

Now we introduce two lemmas for controlling the norm and the rate of convergence of the mollified potential.

\begin{lemma} \label{convolutionSize}
Let $q \in \dot{H}^s (\mathbb{R}^2)$ with $0 < s < s'$. Then 
\begin{align*}
\norm{\varphi_\sigma \ast q}{\dot{H}^{s'}} \leq C \, \sigma^{s - s'} \, \norm{q}{\dot{H}^s} .
\end{align*}
\end{lemma}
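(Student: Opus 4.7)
The plan is to pass to the frequency side via Plancherel and exploit the scaling $\widehat{\varphi_\sigma}(\xi) = \widehat{\varphi}(\sigma\xi)$ together with the rapid decay of $\widehat{\varphi}$. Concretely, I would start from
\begin{align*}
\norm{\varphi_\sigma \ast q}{\dot{H}^{s'}}^2 = \int_{\mathbb{R}^2} |\xi|^{2s'}\, |\widehat{\varphi}(\sigma\xi)|^2\, |\widehat{q}(\xi)|^2 \, d\xi,
\end{align*}
and factor the integrand as $|\xi|^{2s} |\widehat{q}(\xi)|^2$ times the multiplier $m(\xi) := |\xi|^{2(s'-s)} |\widehat{\varphi}(\sigma\xi)|^2$. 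The goal then reduces to showing the uniform bound $\sup_\xi m(\xi) \leq C\, \sigma^{-2(s'-s)}$.

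For this step I would substitute $\eta = \sigma \xi$, which rewrites $m(\xi) = \sigma^{-2(s'-s)} |\eta|^{2(s'-s)} |\widehat{\varphi}(\eta)|^2$. Since $s' - s > 0$, the factor $|\eta|^{2(s'-s)}$ is harmless near the origin; since $\varphi$ is Schwartz, $\widehat{\varphi}$ is also Schwartz and therefore decays faster than any polynomial, so $|\eta|^{2(s'-s)} |\widehat{\varphi}(\eta)|^2$ is uniformly bounded on $\mathbb{R}^2$ by a constant depending only on $\varphi$ and $s'-s$. Pulling this constant out of the integral yields
\begin{align*}
\norm{\varphi_\sigma \ast q}{\dot{H}^{s'}}^2 \leq C\, \sigma^{-2(s'-s)} \int_{\mathbb{R}^2} |\xi|^{2s} |\widehat{q}(\xi)|^2 \, d\xi = C\, \sigma^{2(s-s')} \norm{q}{\dot{H}^s}^2,
\end{align*}
and the claim follows by taking square roots.

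There is essentially no obstacle here: the argument is a routine multiplier estimate. The only subtlety worth flagging is the need for $s'-s > 0$, without which the factor $|\eta|^{2(s'-s)}$ would be singular at $\eta = 0$ and the uniform bound on $m$ would fail; this is why the hypothesis $s < s'$ is both natural and sharp for this type of gain.
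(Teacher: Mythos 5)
Your proposal is correct and follows essentially the same route as the paper: both reduce the estimate to a uniform bound on the multiplier $|\xi|^{s'-s}\,\widehat{\varphi}(\sigma\xi)$, rescale by $\sigma$, and invoke the Schwartz decay of $\widehat{\varphi}$ to bound the resulting supremum. The only cosmetic difference is that you carry out the argument on the squared norms while the paper phrases the same multiplier bound as an application of H\"older's inequality.
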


\begin{proof}
Given that $\widehat{\varphi_\sigma} (\xi) = \widehat{\varphi} (\sigma \xi)$, 
by H\"older's inequality we have
\begin{align*}
\norm{\varphi_\sigma \ast q}{\dot{H}^{s'}} 
&\leq \norm{q}{\dot{H}^s} \, \sup_{\xi} \left\lbrace |\xi|^{s' - s} \, \widehat{\varphi} (\sigma \xi) \right\rbrace \\
&= \sigma^{s - s'} \, \norm{q}{\dot{H}^s} \, \sup_{\zeta} \left\lbrace |\zeta|^{s' - s} \, \widehat{\varphi} (\zeta) \right\rbrace.
\end{align*}
As $\varphi$ belongs to the Schwartz space, the proof is concluded.
\end{proof}

\begin{lemma} \label{convolutionConvergence}
Let $q \in C^{0,\alpha} (\mathbb{R}^2)$. Then 
\begin{align*}
\norm{\varphi_\sigma \ast q - q}{L^\infty} \leq \sigma^\alpha \, |q|_{C^{0,\alpha}}. 
\end{align*}
\end{lemma}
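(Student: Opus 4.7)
The plan is to reduce the estimate to a direct pointwise bound using the fact that $\varphi_\sigma$ is a probability density supported in a small ball, combined with the H\"older continuity hypothesis on $q$.

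First I would use $\int \varphi_\sigma = 1$ to write
\begin{align*}
(\varphi_\sigma \ast q)(x) - q(x) = \int_{\mathbb{R}^2} \varphi_\sigma(y) \, \bigl( q(x-y) - q(x) \bigr) \, dy.
\end{align*}
Because $\varphi$ is supported in the unit ball, the rescaling $\varphi_\sigma(y) = \sigma^{-2} \varphi(\sigma^{-1} y)$ is supported in the ball of radius $\sigma$. Hence on the effective domain of integration we have $|y| \leq \sigma$, and the H\"older hypothesis yields
\begin{align*}
|q(x-y) - q(x)| \leq |q|_{C^{0,\alpha}} \, |y|^\alpha \leq |q|_{C^{0,\alpha}} \, \sigma^\alpha.
\end{align*}

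Next I would pull this uniform bound outside the integral and use the nonnegativity of $\varphi$ together with $\int \varphi_\sigma = 1$ to conclude
\begin{align*}
\bigl| (\varphi_\sigma \ast q)(x) - q(x) \bigr| \leq |q|_{C^{0,\alpha}} \, \sigma^\alpha \int_{\mathbb{R}^2} \varphi_\sigma(y) \, dy = \sigma^\alpha \, |q|_{C^{0,\alpha}}.
\end{align*}
Since the estimate is uniform in $x$, taking the supremum finishes the proof. There is no real obstacle here: the statement is a classical mollifier convergence estimate and the argument is entirely routine once one exploits the compact support of $\varphi$ and the normalization $\int \varphi = 1$, which are both part of the standing hypotheses imposed at the beginning of the introduction.
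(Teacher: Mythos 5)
Your proof is correct and follows essentially the same route as the paper: both write the difference as $\int \varphi_\sigma \,(q(x-\cdot)-q(x))$ using the normalization $\int\varphi_\sigma=1$, invoke the H\"older seminorm on the ball of radius $\sigma$ where $\varphi_\sigma$ is supported, and pull out the uniform bound $\sigma^\alpha\,|q|_{C^{0,\alpha}}$. No issues.
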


\begin{proof}
For $x \in \mathbb{R}^2$ we have
\begin{align*}
| \varphi_\sigma \ast q (x) - q(x) |
&=  \left| q(x) - \int_{B(x,\sigma)} q(y) \, \varphi_\sigma (x-y) \, dy  \right| \\
&\leq  \int_{B(x,\sigma)} \left| q(x) - q(y) \right| \varphi_\sigma (x-y) \, dy .
\end{align*}
Then by H\"older's inequality,
\begin{align*}
| \varphi_\sigma \ast q (x) - q(x) |
&\leq  \int_{B(x,\sigma)} \frac{\left| q(x) - q(y) \right|}{|x - y|^\alpha} |x - y|^\alpha \varphi_\sigma (x-y) \, dy \\
&\leq |q|_{C^{0,\alpha}} \int_{B(x,\sigma)} |x-y|^\alpha \varphi_\sigma (x-y) \, dy \\
&\leq \sigma^\alpha \, |q|_{C^{0,\alpha}},
\end{align*}
which completes the proof.
\end{proof}

\section{Proof of Theorem \ref{mainTheorem}} \label{proofMain}

We use $\text{dim}_H \lbrace E \rbrace $ to denote the Hausdorff dimension of $E$. Theorem \ref{mainTheorem} is a consequence of the following more precise statement.
\begin{theorem} 
Let $s > 0$, let $q \in H^s(\mathbb{R}^2)$ be a complex-valued potential supported in a bounded Lipschitz domain $\Omega\subset \mathbb{R}^2$  and let $\sigma = \lambda^{-1/4}$. Then 
\begin{align*}
\mathrm{dim}_H \left\lbrace x: \lim_{\lambda \to \infty} \, \varphi_\sigma \ast BI_\lambda (x) \neq q(x) \right\rbrace \leq 2 - 2 s.
\end{align*}
\end{theorem}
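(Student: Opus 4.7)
The plan is to reduce the theorem to the almost-everywhere convergence of mollifiers on $H^s$, by splitting $\varphi_\sigma \ast BI_\lambda(x) - q(x)$ into pieces I can control separately. Using Alessandrini's identity \eqref{ale} and the form \eqref{bukhgeimSolutions} of Bukhgeim's solutions,
\begin{align*}
BI_\lambda(x) = T^\lambda[q](x) + T^\lambda_w[q](x).
\end{align*}
Since $T^\lambda[q](x) = (\lambda/\pi)(q \ast K_\lambda)(x)$ with $K_\lambda(y) = e^{i\lambda(y_1^2 - y_2^2)}$ (viewing $q$ as extended by zero outside $\Omega$), convolution against $\varphi_\sigma$ in $x$ commutes with $T^\lambda$. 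Adding and subtracting $\varphi_\sigma \ast q(x)$ gives the decomposition
\begin{align*}
\varphi_\sigma \ast BI_\lambda(x) - q(x) = \bigl(T^\lambda[\varphi_\sigma \ast q](x) - \varphi_\sigma \ast q(x)\bigr) + \varphi_\sigma \ast T^\lambda_w[q](x) + \bigl(\varphi_\sigma \ast q(x) - q(x)\bigr).
\end{align*}

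For the middle piece, Theorem~\ref{remainderSize} provides $\norm{T^\lambda_w[q]}{L^\infty(\Omega)} \leq C\lambda^{-s}\norm{q}{\dot{H}^s}^2$ for $s \in (0,1)$ and $\lambda$ large (for $s \geq 1$ the same is true with any fixed $s'' \in (0,1)$ in place of $s$, using that $q$ has compact support). Since $\norm{\varphi_\sigma}{L^1} = 1$, convolution preserves this $L^\infty$ bound, so the middle term vanishes uniformly in $x$. For the leading piece I would apply Lemma~\ref{statPhaseHs} to the smoothed potential $\varphi_\sigma \ast q$ and use Lemma~\ref{convolutionSize} to control its $\dot{H}^{s'}$ norm. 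For any $s' \in (\max\{1,\,2-s\},\,3)$ this yields
\begin{align*}
\bigl|T^\lambda[\varphi_\sigma \ast q](x) - \varphi_\sigma \ast q(x)\bigr| \leq C\lambda^{(1-s')/2}\sigma^{s-s'}\norm{q}{\dot{H}^s} = C\lambda^{(2-s-s')/4}\norm{q}{\dot{H}^s},
\end{align*}
where I have substituted $\sigma = \lambda^{-1/4}$; the constraint $s' > 2-s$ makes the exponent negative, so this piece also vanishes uniformly in $x$.

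These two uniform bounds collapse the exceptional set in the statement to the exceptional set for mollification of $q$ alone. To finish I would invoke the capacitary Lebesgue differentiation theorem: if $q \in H^s(\mathbb{R}^2)$, then $\varphi_\sigma \ast q(x) \to q(x)$ outside a set of $(s,2)$-Bessel capacity zero, and by the classical Adams--Hedberg capacity-Hausdorff comparison such sets have Hausdorff dimension at most $2-2s$. The main obstacle lies precisely here: the sharp Hausdorff bound requires this potential-theoretic input rather than an elementary mollifier argument. The analytic half of the proof, in contrast, is essentially a bookkeeping exercise once the balance $\sigma = \lambda^{-1/4}$ has been chosen to reconcile the regularity loss from Lemma~\ref{convolutionSize} with the stationary-phase gain from Lemma~\ref{statPhaseHs}.
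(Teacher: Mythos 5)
Your proposal follows essentially the same route as the paper: the identical decomposition $BI_\lambda = T^\lambda[q] + T^\lambda_w[q]$, the commutation $\varphi_\sigma \ast T^\lambda[q] = T^\lambda[\varphi_\sigma \ast q]$, the combination of Lemma \ref{statPhaseHs} with Lemma \ref{convolutionSize} under $\sigma = \lambda^{-1/4}$, and the uniform bound on the remainder via Theorem \ref{remainderSize}, reducing everything to the divergence set of $\varphi_\sigma \ast q$. The only difference is bookkeeping: the paper cites \cite[Lemma A.1]{BBCR11} for the dimension bound on that divergence set, whereas you invoke the equivalent Bessel-capacity/Hausdorff-dimension comparison directly (and you additionally spell out the $s \geq 1$ case of the remainder estimate), which is fine.
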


\begin{proof}
Let $T^\lambda_g$ as in \eqref{tOperator}. By Alessandrini's identity \eqref{ale} we have
\begin{align*}
BI_\lambda (x) &= \frac{\lambda}{\pi} \int_\Omega e^{i \lambda \phi_x (z)} \, q(z) (1 + w_{\lambda, x}(z)) \, dz = T^{\lambda} [q] (x) + T^\lambda_w [q] (x)
\end{align*}
leading to 
\begin{align*} 
\varphi_\sigma \ast BI_\lambda  =  \varphi_\sigma \ast T^{\lambda} [q]  + \varphi_\sigma \ast T^\lambda_w [q].
\end{align*}
Let $q_\sigma := \varphi_\sigma \ast q$. Using the triangle inequality we have
\begin{align} \label{integralSplit}
| \varphi_\sigma \ast BI_\lambda - q | &\leq | \varphi_\sigma \ast T^{\lambda}[q] - q_\sigma | + | q_\sigma  - q | + | \varphi_\sigma \ast T^\lambda_w[q] |. 
\end{align}

For the first term, we can use Fubini's theorem to obtain
\begin{align*}
\varphi_\sigma \ast T^{\lambda} [q] (x) 
&= \frac{\lambda}{\pi} \int_{\mathbb{R}^2} \varphi_\sigma (y) \int_\Omega e^{i \lambda \phi_{x-y} (z)} \, q(z) \, dz \, dy \\
&= \frac{\lambda}{\pi} \int_{\mathbb{R}^2} \varphi_\sigma (y) \int_{\Omega} e^{i \lambda \phi_{x} (z+y)} \, q(z) \, dz \, dy
\\
&= \frac{\lambda}{\pi} \int_{\mathbb{R}^2} e^{i \lambda \phi_{x} (z)} \int_{\mathbb{R}^2} \varphi_\sigma (y) q(z-y) \, dy \, dz
\end{align*}
leading to
\begin{align*}
\varphi_\sigma \ast T^{\lambda} [q] =  T^{\lambda} [q_\sigma].
\end{align*}
Using Lemma \ref{statPhaseHs} and Lemma \ref{convolutionSize}, for $s'$ satisfying $1<s'<3$, we have
\begin{align*} 
\abs{ T^\lambda[q_\sigma] - q_\sigma} \leq C \, \lambda^{\frac{1-s'}{2}} \norm{q_\sigma}{\dot{H}^{s'}} \leq C \, \lambda^{\frac{1-s'}{2}} \sigma^{s - s'} \, \norm{q}{\dot{H}^s}.
\end{align*}
To deal with the third term, we note that by Lemma \ref{remainderSize} we have
$$
 | \varphi_\sigma \ast T^\lambda_w[q] |\le \norm{T^\lambda_{w}[q]}{L^\infty} \leq C \lambda^{-s} \norm{q}{\dot{H}^{s}}^2. 
$$
Plugging these estimates into \eqref{integralSplit},  we can see that
\begin{align} \label{errorTerm}
| \varphi_\sigma \ast BI_\lambda - q| &\leq C \, \lambda^{\frac{1-s'}{2}} \sigma^{s - s'} \, \norm{q}{\dot{H}^s} + C \, \lambda^{-s} \norm{q}{\dot{H}^s}^2  + | q_\sigma- q|. 
\end{align}
Thus we have 
\begin{align*} 
| \varphi_\sigma \ast BI_\lambda (x) - q(x)| &\leq C \, \lambda^{-\kappa} ( \norm{q}{\dot{H}^s} +  \norm{q}{\dot{H}^s}^2 ) + | q_\sigma (x)- q(x)|
\end{align*}
for any $\kappa$ satisfying $0<\kappa<\min \lbrace (1+s)/4, s \rbrace$. 
It only remains to see that
\begin{align*}
\text{dim}_H \left\lbrace x : \lim_{\sigma \to 0} \, q_\sigma (x) \neq q(x) \right\rbrace \leq 2 - 2 s
\end{align*}
which follows from, for example, \cite[Lemma A.1]{BBCR11}.
\end{proof}

\begin{remark}
If the potential satisfies the H\"older condition in some neighborhood of the point to reconstruct, we can obtain an estimation for the decay rate of the error term. More precisely, for $x$ such that $|q|_{C^{0,\alpha}(B(x,r))} < \infty$ for some $r,\alpha>0$, we have
\begin{align*}
| \varphi_\sigma \ast BI_\lambda (x) - q(x) | \leq C \, \lambda^{- \min \lbrace s, \alpha \rbrace / 4} \left( \norm{q}{\dot{H}^s} + \norm{q}{\dot{H}^s}^2 + |q|_{C^{0,\alpha}(B(x,r))} \right).
\end{align*}
This follows from using Lemma \ref{convolutionConvergence} to bound $| q_\sigma (x) - q (x) |$ in \eqref{errorTerm}.
\end{remark}

\section{Polar averaging} \label{polarAveraging}

First we introduce some averaging operators which are used to express our recovery formula involving \textit{polar averaging}. 
For $f:\mathbb{R}^+ \times [0, 2\pi) \to \mathbb{C}$ we define the {\it angular averaging} operator by
\begin{align*}
A_{\mathrm{ang}}[f](r,\theta) := \frac{1}{2\pi} \int_0^{2\pi} f (r, \alpha) \, d\alpha ,
\end{align*}
and the {\it radial smoothing} operator by
\begin{align*}
S_{\mathrm{rad}}[f](r,\theta) := \int_0^1 f \big( r (1+s)^{-1/2} ,\theta\big) \, ds .
\end{align*}
For $\lambda \in \mathbb{R}^+$ and $F\in L^1_{loc}(\mathbb{R}^+ )$ we define the {\it frequency averaging} operator
\begin{align*} 
A_{\mathrm{freq}} [F](\lambda) := \frac{1}{\lambda}\int_\lambda^{2\lambda} F(t)\,dt. 
\end{align*}
In the following lemma, we connect the frequency averaging with the radial smoothing operator.
\begin{lemma} \label{ALambdaToAS}
Let $f \in L^1(\mathbb{R}^2)$. Then
\begin{align*} 
A_{\mathrm{freq}}\big[ T^{(\cdot)} [f](0) \big](\lambda) = T^{\lambda} \big[S_{\mathrm{rad}}[f]\big](0).
\end{align*}
\end{lemma}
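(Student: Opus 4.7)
My plan is to expand both sides using the definition of $T^\lambda$ at the point $x=0$, where $\phi_0(z) = z_1^2 - z_2^2$, and then match them by a two-step change of variables: one to convert the frequency variable into the parameter $s$ appearing in $S_{\mathrm{rad}}$, and one using polar coordinates to absorb a dilation factor into the radial variable.

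More concretely, starting from the left-hand side, Fubini's theorem (justified since $f\in L^1$ and the integration in $t$ is over a bounded interval) gives
\begin{align*}
A_{\mathrm{freq}}\big[T^{(\cdot)}[f](0)\big](\lambda)
= \frac{1}{\pi \lambda}\int_\lambda^{2\lambda} t \int_{\mathbb{R}^2} e^{i t \phi_0(z)} \, f(z)\, dz\, dt.
\end{align*}
The substitution $t = \lambda(1+s)$, $dt = \lambda\, ds$, turns the $t$-integral into an integral in $s\in [0,1]$, producing the phase $e^{i\lambda(1+s)\phi_0(z)}$ and an extra factor of $(1+s)$. The key structural observation is that in polar coordinates $z = (r\cos\theta, r\sin\theta)$ one has $\phi_0(z) = r^2 \cos(2\theta)$, so the effect of the factor $(1+s)$ in front of $\phi_0$ is exactly to rescale $r^2$ by $(1+s)$; the Jacobian $r\,dr\,d\theta$ then cooperates with the substitution $\rho = r(1+s)^{1/2}$ (equivalently $r = \rho (1+s)^{-1/2}$), yielding a $\rho\,d\rho\,d\theta$ measure and cancelling the extra $(1+s)$ factor against the Jacobian $(1+s)^{1/2}\cdot(1+s)^{1/2}$.

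Carrying this out, after both substitutions one lands on
\begin{align*}
A_{\mathrm{freq}}\big[T^{(\cdot)}[f](0)\big](\lambda)
= \frac{\lambda}{\pi} \int_0^1 \int_0^{2\pi}\!\!\int_0^\infty e^{i\lambda \rho^2 \cos(2\theta)}\, f\big(\rho(1+s)^{-1/2},\theta\big)\, \rho \, d\rho\, d\theta \, ds.
\end{align*}
Pulling the $s$-integral inside identifies the integrand as $S_{\mathrm{rad}}[f](\rho,\theta)$, and converting back to Cartesian coordinates recovers $T^\lambda[S_{\mathrm{rad}}[f]](0)$.

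I expect no substantive obstacle: the lemma is essentially a Fubini-plus-change-of-variables identity, and the only place requiring care is bookkeeping the Jacobians and the range of the frequency parameter $(s\in[0,1]$ corresponding to $t \in [\lambda,2\lambda])$. The reason the definition of $S_{\mathrm{rad}}$ uses the awkward exponent $(1+s)^{-1/2}$ is precisely so that this computation balances out; the factor $\tfrac12$ in the exponent comes from the fact that $\phi_0$ is quadratic in $r$, and the upper limit $s=1$ (rather than, say, $s=\infty$) comes from averaging $t$ over the dyadic range $[\lambda,2\lambda]$.
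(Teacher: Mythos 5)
Your proposal is correct and follows essentially the same route as the paper: the substitution $t=\lambda(1+s)$ to rewrite $A_{\mathrm{freq}}$ as an integral over $s\in[0,1]$, then polar coordinates and the rescaling of the radial variable by $(1+s)^{1/2}$ to absorb the extra factor, and finally Fubini to identify $S_{\mathrm{rad}}[f]$. The Jacobian bookkeeping and the justification of Fubini via $f\in L^1$ match the paper's argument, so there is nothing to add.
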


\begin{proof}
First we note that by a change of variables
\begin{align*} 
A_{\mathrm{freq}} [F](\lambda) = \frac{1}{\lambda}\int_\lambda^{2\lambda} F(t)\,dt= \int_0^1 F(\lambda (1 + s) ) \, ds
\end{align*}
so that $A_{\mathrm{freq}}\big[ T^{(\cdot)} [f](0) \big](\lambda)$ can be written as 
\begin{align*}
\int_0^1  T^{\lambda (1 + s)} [f](0) \, ds 
= \int_0^1 \frac{\lambda (1 + s)}{\pi} \int_{\mathbb{R}^2} e^{i \lambda(1 + s) (z_1^2 - z_2^2)} f(z) \, dz \, ds .
\end{align*}
Now, switching to polar coordinates, changing variables $r = (1+s)^{1/2} \, \rho$ and applying Fubini's theorem,  we find
\begin{align*}
& \quad \ \ \int_0^1 \frac{\lambda (1 + s)}{\pi} \int_{\mathbb{R}^2} e^{i \lambda (1 + s)(z_1^2 - z_2^2)} f(z) \, dz \, ds \\
&= \int_0^1 \frac{\lambda (1 + s)}{\pi}  \int_0^\infty \int_0^{2\pi} e^{i \lambda (1 + s) \rho^2 \cos (2 \theta)}  \, f (\rho, \theta) \,  d\theta \,\rho d\rho  \, ds \\
&= \frac{\lambda }{\pi} \int_0^1 \int_0^\infty \int_0^{2\pi} f \left( r \, (1 + s)^{-1/2}, \theta \right) \,  e^{i \lambda r^2 \cos (2 \theta)}  \, d\theta \, rdr  \, ds \\
&= \frac{\lambda }{\pi} \int_0^\infty \int_0^{2\pi} \int_0^1 f \left( r \, (1 + s)^{-1/2}, \theta \right) \, ds  \, e^{i \lambda r^2 \cos (2 \theta)}  \, d\theta \,r dr \\
&= \frac{\lambda }{\pi}  \int_0^\infty \int_0^{2\pi} S_{\mathrm{rad}}[f](r, \theta)\, e^{i \lambda r^2 \cos (2 \theta)} \, d\theta \, rdr \\
&= T^{\lambda} \big[S_{\mathrm{rad}}[f]\big](0)
\end{align*}
and the proof is concluded.
\end{proof}

We use the notation $C^k_0$ with $k \in \mathbb{N}$ to refer to the space of continuous functions with $k$ continuous derivatives and compact support.
The following lemma is useful to see that the angular averaging operator  preserves regularity.
\begin{lemma} \label{rotationalAverageProperty}
If $f \in C^k_0 (\mathbb{R}^2)$, then  $A_{\mathrm{ang}}[f]\in C^k_0 (\mathbb{R}^2)$.
\end{lemma}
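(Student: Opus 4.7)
The plan is to recognize $A_{\mathrm{ang}}[f]$ as an average over rotations of $f$ itself, which makes the preservation of smoothness and compact support transparent. After identifying functions in polar coordinates with their counterparts on $\mathbb{R}^2$, a change of variable in the angular integral yields
\begin{align*}
A_{\mathrm{ang}}[f](x) = \frac{1}{2\pi}\int_0^{2\pi} f(R_\alpha x)\, d\alpha,
\end{align*}
where $R_\alpha$ denotes rotation of $\mathbb{R}^2$ by the angle $\alpha$. This reformulation has the additional benefit of being manifestly well-defined at $x = 0$, sidestepping any apparent ambiguity from the polar parametrization.

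The compact support statement is then immediate: since rotations are isometries, $|R_\alpha x| = |x|$, so if $\mathrm{supp}(f) \subset B(0, R)$ then the integrand vanishes for all $|x| > R$, giving $\mathrm{supp}(A_{\mathrm{ang}}[f]) \subset B(0, R)$ as well.

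For regularity, I would differentiate under the integral sign. For each fixed $\alpha$, the chain rule applied to $x \mapsto f(R_\alpha x)$ expresses every partial derivative of order at most $k$ as a linear combination of terms of the form $(\partial^\gamma f)(R_\alpha x)$ with $|\gamma| \leq k$, with coefficients that are bounded trigonometric polynomials in $\alpha$. Because $f \in C^k_0(\mathbb{R}^2)$, these quantities are uniformly bounded in $\alpha$ and continuous in $x$, so dominated convergence justifies passing each of the $k$ derivatives under the integral and shows the resulting integral is continuous in $x$. This gives $A_{\mathrm{ang}}[f] \in C^k(\mathbb{R}^2)$, and combined with the support statement above, concludes the proof.

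I do not anticipate a substantial obstacle here. The only point that requires any care is that a naive approach working directly with the radial profile $F(r) := A_{\mathrm{ang}}[f](r, \theta)$ and trying to show $F(|x|)$ is $C^k$ on $\mathbb{R}^2$ would, in principle, require verifying cancellation conditions for odd-order radial derivatives at $r = 0$; the rotation-average formulation circumvents this issue cleanly.
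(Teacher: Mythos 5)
Your proof is correct, and it takes a genuinely different route from the paper's. The paper works directly in polar coordinates: for $k=0$ it uses uniform continuity of the compactly supported $f$ to bound $|A_{\mathrm{ang}}[f](x)-A_{\mathrm{ang}}[f](y)|$, and for $k>0$ it observes that the average is radial (so angular derivatives vanish) and differentiates the radial profile under the integral sign, applying the $k=0$ argument to $\partial^k f/\partial r^k$. You instead recast the operator as the average of $f$ over the rotation group, $A_{\mathrm{ang}}[f](x)=\frac{1}{2\pi}\int_0^{2\pi}f(R_\alpha x)\,d\alpha$, and differentiate under the integral in Cartesian coordinates via the chain rule, with the rotation entries supplying bounded coefficients. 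What your version buys is precisely the point you flag: it is manifestly $C^k$ \emph{at the origin}, whereas the paper's argument only directly controls the polar derivatives of the radial profile, and upgrading ``the profile is $C^k$ in $r$'' to ``the function is $C^k$ on $\mathbb{R}^2$'' at $r=0$ requires the cancellation conditions you mention (compare $F(r)=r$, whose radialization $|x|$ is not $C^1$). In that sense your argument is the more airtight of the two. What the paper's phrasing buys is that it stays in the polar-coordinate picture that the subsequent application needs --- in the proof of Theorem \ref{rotationalRecoveryTheorem} the output $V_0$ is immediately fed, as a one-variable radial profile, into the operator $S_{\mathrm{rad}}$ of Lemma \ref{frequencyAveraging} --- so the identity between the profile's derivatives and the averaged derivatives of $f$ is recorded explicitly. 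Both arguments are elementary and either would serve.
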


\begin{proof}
For the case $k=0$, note that $f$ is uniformly continuous given that it has compact support. Thus, for any $\epsilon>0$ there exists some $\delta > 0$ such that, for $x,y \in \mathbb{R}^2$ satisfying $|x-y| < \delta$ we have $|f(x)-f(y)| < \epsilon$, so that
\begin{align*}
|A_{\mathrm{ang}}[f](x) - A_{\mathrm{ang}}[f](y) | = \left| \frac{1}{2 \pi} \int_0^{2\pi} f(|x|, \theta) - f(|y|, \theta) \, d\theta \right| < \epsilon
\end{align*} 
whenever $|x-y| < \delta$.

For $k>0$, as $A_{\mathrm{ang}}[f]$ is radial,  the angular derivatives are zero. For the radial derivative,  we note that
\begin{align*}
\frac{\partial^k }{\partial r^k}A_{\mathrm{ang}}[f](r, \theta) = \frac{1}{2 \pi} \int_0^{2\pi} \frac{\partial^k f}{\partial r^k}(r, \alpha) \, d\alpha  .
\end{align*}
Applying the previous argument to the derivatives of the function concludes the proof.
\end{proof}

The following lemma describes how $S_{\mathrm{rad}}$, considered here as acting on one-dimensional functions,
\begin{align*}
S_{\mathrm{rad}}[f](r) := \int_0^1 f \big( r (1+s)^{-1/2}\big) \, ds ,
\end{align*}
regularizes away from the origin while at the same time it preserves regularity in the whole domain.
\begin{lemma} \label{frequencyAveraging}
Let $f \in L^1(\mathbb{R}^+)$. Then

(i) $S_{\mathrm{rad}}[f] \in C^0(0, \infty)$.

(ii) If $f \in C^k(0, \infty)$, then $S_{\mathrm{rad}}[f] \in C^{k+1}(0, \infty)$.

(iii) If $f \in C^k[0, \infty)$, then $S_{\mathrm{rad}}[f] \in C^{k}[0, \infty)$.

(iv) If $\text{supp} (f) \subset (a, b)$, then $\text{supp} (S_{\mathrm{rad}}[f]) \subset (a , b \sqrt{2})$.
\end{lemma}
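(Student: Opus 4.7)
The plan is to rewrite $S_{\mathrm{rad}}[f]$ via the change of variables $u = r(1+s)^{-1/2}$. A direct computation gives $ds = -2 r^2 u^{-3}\, du$, with $u = r$ at $s = 0$ and $u = r/\sqrt{2}$ at $s = 1$, so that
\begin{align*}
S_{\mathrm{rad}}[f](r) = 2 r^2 \int_{r/\sqrt{2}}^{r} \frac{f(u)}{u^3}\, du.
\end{align*}
This alternative form, which isolates the $r$-dependence in the endpoints of integration and in a smooth prefactor, will be the main tool for parts (i), (ii), and (iv).

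Part (iv) is immediate from either formula: if $\mathrm{supp}(f) \subset (a,b)$, then $f(r(1+s)^{-1/2}) \neq 0$ requires $r \in (a(1+s)^{1/2}, b(1+s)^{1/2})$, and taking the union over $s \in [0,1]$ yields $\mathrm{supp}(S_{\mathrm{rad}}[f]) \subset (a, b\sqrt{2})$. For (i), the rewritten formula presents $S_{\mathrm{rad}}[f]$ as the product of the smooth factor $2r^2$ with an integral whose integrand $f(u)/u^3$ lies in $L^1_{\mathrm{loc}}(0,\infty)$, and absolute continuity of the Lebesgue integral in its endpoints delivers continuity on $(0,\infty)$. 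For (ii), the same formula grants a free derivative via the fundamental theorem of calculus: if $f \in C^k(0,\infty)$, then $f(u)/u^3 \in C^k(0,\infty)$, so $G(r) := \int_{r/\sqrt{2}}^r f(u)/u^3\, du$ has derivative $G'(r) = f(r)/r^3 - 2 f(r/\sqrt{2})/r^3$, which is itself $C^k$; hence $G \in C^{k+1}(0,\infty)$, and multiplication by the smooth prefactor $2r^2$ preserves this regularity.

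For (iii) the original formula is preferable, since the rewritten one would force us to wrestle with a delicate cancellation between $2r^2$ and the singularity of $u^{-3}$ at the origin. Instead, for $f \in C^k[0,\infty)$ we differentiate under the integral sign to obtain
\begin{align*}
\frac{d^j}{dr^j} S_{\mathrm{rad}}[f](r) = \int_0^1 (1+s)^{-j/2}\, f^{(j)}\bigl(r(1+s)^{-1/2}\bigr)\, ds
\end{align*}
for $0 \le j \le k$; this is justified by the uniform boundedness of $f^{(j)}$ on any compact subset of $[0,\infty)$, and continuity of the resulting expression in $r \in [0,\infty)$ follows from dominated convergence. The main conceptual point is to resist the temptation to prove (ii) by the same differentiation-under-the-integral approach, which would only reproduce the regularity of $f$ rather than upgrade it; it is precisely the change of variables in the rewritten form that transfers the $r$-dependence to the endpoints and thereby yields an extra derivative.
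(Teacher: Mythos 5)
Your proof is correct and rests on the same key idea as the paper's, namely the change of variables $u=r(1+s)^{-1/2}$ that moves the $r$-dependence into the endpoints of integration; parts (iii) and (iv) are handled exactly as in the paper. The only difference is presentational: for (ii) you apply the fundamental theorem of calculus once to the rewritten integral and bootstrap, whereas the paper performs the change of variables on difference quotients of $g^{(k)}$ and computes $g^{(k+1)}$ explicitly --- your version is cleaner but mathematically equivalent.
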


\begin{proof}
Let $\epsilon > 0$ and write $g=S_{\mathrm{rad}}[f]$. Then 
\begin{align*} 
g(t + \epsilon) - g(t) = \int_0^1 f \big( (t + \epsilon) (1 + s)^{-1/2} \big) \, ds - \int_0^1 f \big( t \, (1 + s)^{-1/2} \big) \, ds.
\end{align*}
Changing variables in the first integral by taking  
\begin{align} \label{changeOfVariables1}
s' = \frac{t + \epsilon}{t}(1 + s)^{-1/2}
\end{align} 
and in the second integral taking
\begin{align} \label{changeOfVariables2}
s' = (1 + s)^{-1/2}
\end{align}
we have
\begin{align*} 
g(t + \epsilon) - g(t)  
&= 2 \int_{\frac{1}{\sqrt{2}} + \frac{\epsilon}{t\sqrt{2}}}^{1 + \frac{\epsilon}{t}} f(t \, s) \, \left(\frac{t+\epsilon}{t} \right)^2 \, s^{-3} \, ds -
2 \int_{\frac{1}{\sqrt{2}}}^1 f(t \, s)  \, s^{-3} \, ds \nonumber \\ 
&=  2 \left(\frac{t+\epsilon}{t} \right)^2 \int_{1}^{1 + \frac{\epsilon}{t}} f(t \, s) \, s^{-3} \, ds -
2 \int_{\frac{1}{\sqrt{2}}}^{\frac{1}{\sqrt{2}} + \frac{\epsilon}{t\sqrt{2}}} f(t \, s) \, s^{-3} \, ds \\ 
& \quad +2 \frac{2 \, t \, \epsilon + \epsilon^2}{t^2}  \int_{\frac{1}{\sqrt{2}} + \frac{\epsilon}{t\sqrt{2}}}^{1} f(t \, s) \, s^{-3} \, ds. \nonumber
\end{align*}
Given some fixed $t>0$ we have $f(t \, s) \, s^{-3} \in L^1((1/\sqrt{2},\infty))$. By absolute continuity with respect to the Lebesgue measure, the first two integrals are smaller than $\delta$ for any $\delta>0$ by taking $\epsilon$ small enough. The third term is smaller than $6 \, \epsilon \, t^{-1} \| f(t \, s) \, s^{-3} \|_{L^1((1/\sqrt{2},\infty))}$. Therefore we know that $g$ is right-continuous in $(0,\infty)$. Taking $\epsilon < t$ a similar change of variables yields
\begin{align*}
g(t) - g(t-\epsilon) &= 2 \int_{1 - \frac{\epsilon}{t}}^1 f(t \, s) \, s^{-3} \, ds - 2 \left( \frac{t-\epsilon}{t} \right)^2 \int_{\frac{1}{\sqrt{2}} - \frac{\epsilon}{t\sqrt{2}}}^{\frac{1}{\sqrt{2}}} f(t \, s) \, s^{-3} \, ds \\
& \quad + 2 \left( \frac{2t\epsilon - \epsilon^2}{t^2} \right) \int_{\frac{1}{\sqrt{2}}}^{1 - \frac{\epsilon}{t}} f(t \, s) \, s^{-3} \, ds
\end{align*}
and by the same arguments we see that $g$ is left-continuous in $(0,\infty)$, concluding the proof of \textit{(i)}.

Now, differentiating under the sign of the integral we obtain
\begin{align*}
g^{(k)}(t) = \int_0^1 (1+s)^{-k/2} \, f^{(k)} \big( t \, (1+s)^{-1/2} \big) \, ds 
\end{align*}
and using the same change of variables as in \eqref{changeOfVariables1} and \eqref{changeOfVariables2} we obtain
\begin{align*}
& \quad \ \ g^{(k)}(t + \epsilon ) - g^{(k)}(t) \\
&= \frac{2 \, (t+ \epsilon )^{2-k}}{t^{2-k}} \int^{1 + \frac{\epsilon}{t}}_{\frac{1}{\sqrt{2}} + \frac{\epsilon}{t\sqrt{2}}} f^{(k)}(t \, s) \, s^{k-3} \, ds - 2 \int^1_{\sqrt{2}} f^{(k)}(t \, s) \, s^{k-3} \, ds \\
&= \frac{2 \, (t+ \epsilon )^{2-k}}{t^{2-k}} \int_{1}^{1 + \frac{\epsilon}{t}} f^{(k)}(t \, s) \, s^{k-3} \, ds -
2 \int_{\frac{1}{\sqrt{2}}}^{\frac{1}{\sqrt{2}} + \frac{\epsilon}{t\sqrt{2}}} f^{(k)}(t \, s) \, s^{k-3} \, ds \\ 
& \quad + 2 \frac{ (t+ \epsilon )^{2-k} - t^{2-k}}{t^{2-k}}  \int_{\frac{1}{\sqrt{2}} + \frac{\epsilon}{t\sqrt{2}}}^{1} f^{(k)}(t \, s) \, s^{k-3} \, ds .
\end{align*}
For $t>0$ we can divide by $\epsilon$ and take the limit as $\epsilon$ tends to $0$ to get
\begin{align*}
g^{(k+1)}(t) = \frac{2}{t}  f^{(k)}(t) - \frac{2^{2 - k/2}}{t} f^{(k)}(t / \sqrt{2}) + \frac{4 - 2k}{t} \int_{\frac{1}{\sqrt{2}}}^{1} f^{(k)}(ts) \, s^{k-3} \, ds .
\end{align*}
As the right-hand side is continuous in $(0, \infty)$, so is $g^{(k+1)}$, and \textit{(ii)} is proved. 

To prove \textit{(iii)} just see that for any $\delta > 0$ we can take $\epsilon > 0$ such that for  $t < \epsilon$ then $|f^{(k)} (t) - f^{(k)} (0)| < \delta$. Thus
\begin{align*}
\left| g^{(k)}(\epsilon) - g^{(k)}(0) \right| 
& \leq \int_0^1 (1+s)^{-k/2} \left| f^{(k)} \big(\epsilon \, (1+s)^{-1/2} \big) - f^{(k)}(0) \right| \, ds \\
& < \int_0^1 (1+s)^{-k/2} \, \delta \, ds \leq \delta .
\end{align*}
The continuity of $g^{(k)}$ away from zero follows from \textit{(ii)}.

Point \textit{(iv)} is a straightforward consequence of the definition of $S_{\mathrm{rad}}$.
\end{proof}

\begin{remark}
The gain of regularity in point \textit{(ii)} cannot be extended to $[0, \infty)$. We can see this by taking $f(t) = t^{1/2}$.
\end{remark}

We now consider Bukhgeim solutions for the Schr\"odinger equation where the potential has been rotated. Using these solutions, we construct a new family of solutions $u_{\lambda, x, \theta}$ for the Schr\"odinger equation with the original  potential (before taking the rotation). These new solutions depend on an additional parameter $\theta \in [0, 2\pi)$.

More precisely, let $x, z \in \mathbb{R}^2$ and let $R_{x,\theta}(z)$ denote the rotation of $z$ around $x$ by an angle $\theta$, given by
\begin{align*}
R_{x,\theta}(z) = x + \left(
\begin{array}{c c} 
\cos \theta & - \sin \theta \\
\sin \theta & \cos \theta
\end{array}
\right) (z-x).
\end{align*}
Letting $q_{x,\theta} (z) := q \circ R_{x,\theta} (z)$, we consider Bukhgeim solutions  
$$u_{\lambda, x}=e^{i \lambda \psi_x}  (1 + w_{\lambda,x})$$  to the Schr\"odinger equation with rotated potential $\Delta u = q_{x,\theta} \, u$. We write $$u_{\lambda, x, \theta} (z) := u_{\lambda, x} \circ R_{x,-\theta}(z),$$
and consider the boundary information at frequency $\lambda$ defined by
\begin{align*}
BI_x(\lambda) := \frac{\lambda}{2\pi^2} \int_0^{2\pi}\Big \langle (\Lambda_q - \Lambda_0) [u_{\lambda, x, \theta} |_\bo ], e^{i \lambda \overline{\psi}_x \circ R_{x,-\theta}} |_\bo \Big \rangle\, d\theta.
\end{align*}

By the conformal invariance of the Laplacian,
\begin{align*}
\Delta u_{\lambda, x, \theta} = \Delta \big( u_{\lambda, x} \circ R_{x,-\theta} \big) = \big( \Delta u_{\lambda, x} \big) \circ R_{x,-\theta},
\end{align*}
we see that the $u_{\lambda, x, \theta}$ solves the original Schr\"odinger equation $\Delta u = q \, u$. By the same rotational invariance,  $e^{i \lambda \psi_x \circ R_{x,-\theta}}$ is a solution to Laplace's equation, and so we can still use Alessandrini's identity \eqref{ale} to reinterprete the boundary information. Moreover   $u_{\lambda, x, \theta} |_\bo$ can be recovered from $\Lambda_q-\Lambda_0$ by a suitably rotated version of \cite[Theorem 1.1]{AFR16}.

\begin{theorem} \label{rotationalRecoveryTheorem}
Let $s > 0$, let $q \in H^s (\mathbb{R}^2)$ be a complex-valued potential and let $\Omega$ be a bounded Lipschitz domain in the plane. Then, for any $x$ such that $q \in C^2(B(x,r))$ for some $r>0$, we have 
\begin{align*}
\lim_{\lambda \to \infty} A_{\mathrm{freq}}^{\,3}[BI_x](\lambda) = q(x).
\end{align*}
\end{theorem}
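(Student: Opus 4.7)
My plan mirrors the proof of Theorem \ref{mainTheorem}, with the combined averaging $A_{\mathrm{freq}}^3 \circ A_\theta$ playing the role of the mollification $\varphi_\sigma \ast$; here $A_\theta[\,\cdot\,] := \frac{1}{2\pi}\int_0^{2\pi}(\cdot)\,d\theta$. The goal is to show that this averaging commutes with $T^\lambda$ up to a smoothing of the amplitude, reducing convergence to a stationary-phase statement for a smoother potential plus a controllable remainder.

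First I would expand $BI_x(\lambda)$ via Alessandrini's identity. Since rotations preserve the Laplacian, $e^{i\lambda \overline{\psi}_x \circ R_{x,-\theta}}$ is harmonic, and $u_{\lambda, x, \theta}$ solves $\Delta u = qu$; substituting $w = R_{x,-\theta}(z)$ in the resulting volume integral gives
\begin{align*}
BI_x(\lambda) = A_\theta\big[T^\lambda[q_{x,\theta}](x)\big] + A_\theta\big[T^\lambda_w[q_{x,\theta}](x)\big].
\end{align*}
Rotation invariance of $\dot{H}^s$ gives $\|q_{x,\theta}\|_{\dot{H}^s} = \|q\|_{\dot{H}^s}$, so Theorem \ref{remainderSize} bounds the second term uniformly by $C\lambda^{-s}\|q\|_{\dot{H}^s}^2$, and this bound is preserved by both $A_\theta$ and $A_{\mathrm{freq}}^3$. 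Fubini identifies the main term as $A_\theta[T^\lambda[q_{x,\theta}](x)] = T^\lambda[\overline{q}_x](x)$, where $\overline{q}_x(z) := \frac{1}{2\pi}\int_0^{2\pi} q(R_{x,\alpha}z)\,d\alpha$ is the angular average of $q$ about $x$, radial about $x$ with profile $G(\rho) := \overline{q}_x(x + (\rho,0))$. A translated version of Lemma \ref{ALambdaToAS}, obtained by the substitution $z \mapsto x + w$ that recenters the oscillatory integral at the origin, iterates three times to yield
\begin{align*}
A_{\mathrm{freq}}^3\big[T^{(\cdot)}[\overline{q}_x](x)\big](\lambda) = T^\lambda[\widetilde{Q}_x](x), \qquad \widetilde{Q}_x(z) := S_{\mathrm{rad}}^3[G](|z-x|).
\end{align*}

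It then remains to show $T^\lambda[\widetilde{Q}_x](x) \to \widetilde{Q}_x(x)$, where $\widetilde{Q}_x(x) = S_{\mathrm{rad}}^3[G](0) = G(0) = \overline{q}_x(x) = q(x)$ because the angular average at the center reproduces $q(x)$ and $S_{\mathrm{rad}}$ fixes constants. Fix a cutoff $\chi \in C^\infty_c(B(x,r))$ with $\chi \equiv 1$ near $x$. On $B(x,r)$ the hypothesis $q \in C^2$ propagates to $\overline{q}_x$, so $G \in C^2[0, r)$, and Lemma \ref{frequencyAveraging} \textit{(iii)} gives the same for $S_{\mathrm{rad}}^3[G]$; hence $\chi\widetilde{Q}_x \in C^2_0(\mathbb{R}^2) \subset H^{s'}$ for any $s' \in (1,2)$, and Lemma \ref{statPhaseHs} delivers $T^\lambda[\chi\widetilde{Q}_x](x) \to \widetilde{Q}_x(x) = q(x)$. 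For the tail $(1-\chi)\widetilde{Q}_x$, supported on $\{|z - x| \geq c\}$ for some $c > 0$, the phase $\phi_x$ has no critical point, and a standard non-stationary-phase integration-by-parts argument forces $T^\lambda[(1-\chi)\widetilde{Q}_x](x) \to 0$.

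The main obstacle is making this tail bound rigorous under the weak global hypothesis $q \in H^s(\mathbb{R}^2)$ with merely $s > 0$: Lemma \ref{frequencyAveraging} \textit{(ii)} yields smoothness of $S_{\mathrm{rad}}^3[G]$ on $(0, \infty)$ only when $G$ is continuous there, which $H^s$ need not provide. I expect the cleanest route is to first regularize $q$ by $\varphi_\sigma \ast q$, apply the preceding argument to the regularized potential where all required derivatives exist, and pass to the limit using the rotation and averaging invariance of the bounds in Theorem \ref{remainderSize} together with the convergence of $\varphi_\sigma \ast q$ to $q$ in $H^s$, in the spirit of the balancing $\sigma = \lambda^{-1/4}$ used in the proof of Theorem \ref{mainTheorem}.
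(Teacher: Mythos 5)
Your setup coincides with the paper's: Alessandrini's identity plus the rotation change of variables gives the main term as $T^\lambda$ applied to the angular average $V_0=A_{\mathrm{ang}}[q]$, the remainder is killed uniformly in $\theta$ by Theorem \ref{remainderSize} (rotation invariance of the $\dot H^s$ norm), and Lemma \ref{ALambdaToAS}, iterated three times, converts $A_{\mathrm{freq}}^{\,3}$ into $T^\lambda[S_{\mathrm{rad}}^3[V_0]]$. The gap is in the last step: you never establish any regularity of the tail $(1-\chi)S_{\mathrm{rad}}^3[G]$, and both of your proposed repairs miss the mechanism that makes the theorem work. The non-stationary-phase bound needs derivatives of the tail amplitude, i.e.\ exactly the regularity you flag as unavailable; and the point you overlooked is Lemma \ref{frequencyAveraging}\,\textit{(i)}: a single radial smoothing already maps $L^1(\mathbb{R}^+)$ into $C^0(0,\infty)$, after which \textit{(ii)} gains one derivative per application. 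Since $q\in H^s$ is compactly supported it is in $L^1$, and the local $C^2$ bound (via Lemma \ref{rotationalAverageProperty}) makes the one-variable profile of $V_0$ genuinely integrable near the origin, so after splitting $V_0=\varphi V_0+(1-\varphi)V_0$ with a radial cutoff one gets, using \textit{(iii)} on the first piece and \textit{(i)}, \textit{(ii)}, \textit{(iv)} on the second, that $V_3=S_{\mathrm{rad}}^3[V_0]\in C^2_0(\mathbb{R}^2)\subset H^2(\mathbb{R}^2)$; then Lemma \ref{statPhaseHs} alone finishes the proof, with no cutoff of $T^\lambda$ and no integration by parts. This smoothing of the rough tail is precisely why three frequency averages are taken.

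Your fallback (mollify $q$ by $\varphi_\sigma$, run the argument for $q_\sigma$, pass to the limit) does not close either. The boundary data and the remainder are tied to the true $q$, so mollification only affects the main term, and you are left with $T^\lambda\big[S_{\mathrm{rad}}^3[\,\overline{(q-q_\sigma)}_x\,]\big](x)$ to control; but the only pointwise tools available are Lemma \ref{statPhaseHs}, which requires $\dot H^{s'}$ regularity with $s'>1$ (and $\|q-q_\sigma\|_{\dot H^{s'}}$ blows up like $\sigma^{s-s'}$ by Lemma \ref{convolutionSize}), and the trivial bound $|T^\lambda[f]|\le \pi^{-1}\lambda\|f\|_{L^1}$, which costs a factor $\lambda$ against a difference that is only $O(\sigma^s)$ in $L^1$. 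For small $s$ no choice of $\sigma=\sigma(\lambda)$ balances these, which is exactly why Theorem \ref{mainTheorem} applies the mollifier to the boundary data $BI_\lambda$ itself (so no difference term ever appears, and $q_\sigma\to q$ is only needed pointwise a.e.), and why the present theorem relies on the genuine regularization by $S_{\mathrm{rad}}$ rather than an approximation-and-limit argument. To repair your proof, drop the mollification and the non-stationary-phase step, verify the one-dimensional $L^1$ integrability of the radial profile, and invoke Lemma \ref{frequencyAveraging} as above to conclude $V_3\in H^2$.
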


\begin{proof}
Without loss of generality, we can suppose that we are recovering the potential at the origin, and so we omit the dependence on $x=0$.
The first step is to split the reconstruction formula into a main term and a remainder term using Alessandrini's identity \eqref{ale}, to obtain
\begin{align*}
BI_{x}(\lambda) 
&= \frac{\lambda}{2\pi^2} \int_0^{2 \pi} \Big \langle (\Lambda_q - \Lambda_0) [u_{\lambda, x, \theta} (z) ], e^{i \lambda \overline{\psi} \circ R_{-\theta}} |_\bo \Big \rangle \, d\theta \\
& = \frac{\lambda}{2\pi^2} \int_0^{2 \pi} \int_{\mathbb{R}^2} q(z)\, e^{i \lambda \phi \circ R_{-\theta}(z)}   (1 + w_{\lambda} \circ R_{-\theta}(z)) \, dz \, d\theta \\
& = \frac{\lambda}{2\pi^2} \int_0^{2 \pi} \int_{\mathbb{R}^2} q \circ R_{\theta}(z) \, e^{i \lambda \phi(z)}  (1 + w_{\lambda}(z)) \, dz \, d\theta \\
& = \frac{1}{2 \pi} \int_0^{2 \pi} T^{\lambda} [ q \circ R_{ \theta} ] + T^\lambda_{w} [ q \circ R_{ \theta}  ]\, d\theta .
\end{align*}
Now, by Fubini's theorem we see that 
\begin{align*}
\frac{1}{2 \pi} \int_0^{2 \pi} T^{\lambda} [ q \circ R_{ \theta} ]\,d\theta
&= \frac{\lambda}{2 \pi^2} \int_{\mathbb{R}^2} e^{i \lambda \phi(z)} \int_0^{2 \pi} \, q \circ R_\theta (z) \, d\theta \, dz \\
&= T^{\lambda} [V_0] 
\end{align*}
where $V_0(z)=A_{\mathrm{ang}} [q](|z|)$.
Thus, we find
\begin{align*}
A_{\mathrm{freq}}^{\,3} [BI_{x}] = A_{\mathrm{freq}}^{\,3} \big[ T^{(\cdot)} [V_0] \big] + A_{\mathrm{freq}}^{\,3} \bigg[ \frac{1}{2\pi} \int_0^{2\pi} T^{(\cdot)}_{w} [ q \circ R_{\theta}]\,d\theta \bigg].
\end{align*}
Now by Lemma~\ref{ALambdaToAS}, we have that
$$
A_{\mathrm{freq}}^{\,3} \big[ T^{(\cdot)} [V_0] \big](\lambda) = T^{\lambda} [V_3], \quad \text{where}\ V_j=S_{\mathrm{rad}}[V_{j-1}], 
$$
and by Lemma \ref{remainderSize} we know that the remainder satisfies
\begin{align*}
\lim_{\lambda \to \infty} T^\lambda_{w} [ q \circ R_{\theta} ] = 0. 
\end{align*}
Thus we find that
\begin{align*}
\lim_{\lambda\to\infty} A_{\mathrm{freq}}^{\,3} [BI_{x}](\lambda) = \lim_{\lambda\to\infty} T^{\lambda} [V_3].
\end{align*}
Noting that $V_3(0)=q(0)$, it remains to prove that $V_3\in H^2 (\mathbb{R}^2)$ so that we can conclude using Lemma~\ref{statPhaseHs}.

To see that $V_3\in H^2 (\mathbb{R}^2)$, recall that $q \in H^s (\mathbb{R}^2)$ is compactly supported, so that $q \in L^1(\mathbb{R}^2)$, which gives us
\begin{align*}
&\int_0^1 | V_0(\rho, \theta)| \, \rho \, d\rho = \int_0^1 \left| \frac{1}{2\pi} \int_0^{2\pi} q(\rho, \theta) \, d\theta \right| \rho \, d\rho \\
& \quad \quad \quad \quad \leq \int_0^1 \int_0^{2\pi} |q(\rho, \theta)|  \, d\theta \, \rho d\rho  < \infty 
\end{align*}
for any $\theta \in [0, 2\pi)$. By Lemma \ref{rotationalAverageProperty} we know that $V_0 \in C^2(B_r)$ so $V_0(\rho, \alpha)$ is bounded for $\rho<r/2$; therefore the one variable function $V_0(\rho)$ belongs to $L^1(\mathbb{R}^+)$.
Let $\varphi \in C^\infty_0 (B_r)$ be a radial function such that $\varphi (z) = 1$ for $|z| < r/2$. As $V_0 \in C^2(B_r)$ we can use part \textit{(iii)} of Lemma \ref{frequencyAveraging}  to obtain
\begin{align*}
S_{\mathrm{rad}} [\varphi \, V_0] \in C^2_0(\mathbb{R}^2), 
\end{align*}
part \textit{(i)} of Lemma \ref{frequencyAveraging}  to gain regularity away from zero
\begin{align*}
S_{\mathrm{rad}} [(1 - \varphi) V_0] \in C^0_0 (\mathbb{R}^2),
\end{align*}
and part \textit{(iv)} of Lemma \ref{frequencyAveraging}  to control the support
\begin{align*}
S_{\mathrm{rad}} [(1 - \varphi) V_0 (\cdot, \alpha)](\rho) = 0, \quad \text{for } \rho < r/2 ,
\end{align*}
leading to
\begin{align*}
V_1 \in C^0_0 (\mathbb{R}^2) \cap C^2(B_{r/2})
\end{align*}
given that $V_1 = S_{\mathrm{rad}} [ \varphi V_0 ] + S_{\mathrm{rad}} [ (1 - \varphi) V_0 ]$.
Using the same arguments (but using part \textit{(ii)} of Lemma \ref{frequencyAveraging}  to gain regularity instead of part \textit{(i)} of Lemma~\ref{frequencyAveraging}) we can see that
$V_2 \in C^1_0 (\mathbb{R}^2) \cap C^2(B_{r/4})$ and finally $V_3 \in C^2_0 (\mathbb{R}^2)\subset H^2(\mathbb{R}^2)$. 
\end{proof}

\section{Numerical tests} \label{numericalTests}

The standard reconstruction formula \eqref{standardRecoveryFormula} rests on the decomposition 
\begin{align*}
\frac{\lambda}{\pi} \int_{\bo} e^{i \lambda \overline{\psi}_x} (\Lambda_q - \Lambda_0) [u_{\lambda, x}] = T^\lambda [q] + T^\lambda_w [q]
\end{align*}
where $T^\lambda [q]$ is the main term, an integral defined in the domain which converges to the potential as $\lambda$ grows, and $T^\lambda_w [q]$ is a remainder, which tends to zero.
In this section we perform numerical tests to see how the averaging procedures improve the convergence of the main term to the potential.

We compute the integrals corresponding to the main term by brute force, evaluating the integrand over a sufficiently dense mesh. We use a uniform rectangular mesh, where the number of points varies between 640,000 and 64,000,000 depending on the value of $\lambda$. 
The main term has been computed on a coarser mesh, also regular, with 40,000 points. That is, the resolution of the images shown is of 40,000 pixels.
We do not provide an a priori method for determining the value for the parameter $\sigma$ in the mollifier average; in the results shown the value used is the one that best reduces the $L^1$ error in each example.

The experiments show a significant improvement for the mollifier average and for the angular average, both in terms of the visual image obtained and in terms of the $L^1$ error. In contrast, the frequency average does not improve the convergence in a stable manner and we omit results obtained for this procedure.  

We also consider a \textit{combined averaging}, where we apply a mollifier average after having applied an angular average. Our experiments indicate that, for the best choice of $\sigma$, this combined method gives less error than the other two averaging procedures alone, but the extra improvement is not so pronounced. 
Note that for this combined averaging we also need to select a value for the parameter $\sigma$; we follow the same strategy and use the most convenient value for each example.

In Figures \ref{TS_phantom} to \ref{SL_phantom} we can see how the averaging procedures improve the convergence of the main term to the true potential. The examples considered include different geometries for the potential and a range of frequencies. Note that, even though the potentials considered do not belong to $H^{1/2}$, the main term of the standard formula is known to converge to the potential due to \cite{T17}.

\begin{figure}
\centering
\includegraphics[width=4.0cm]{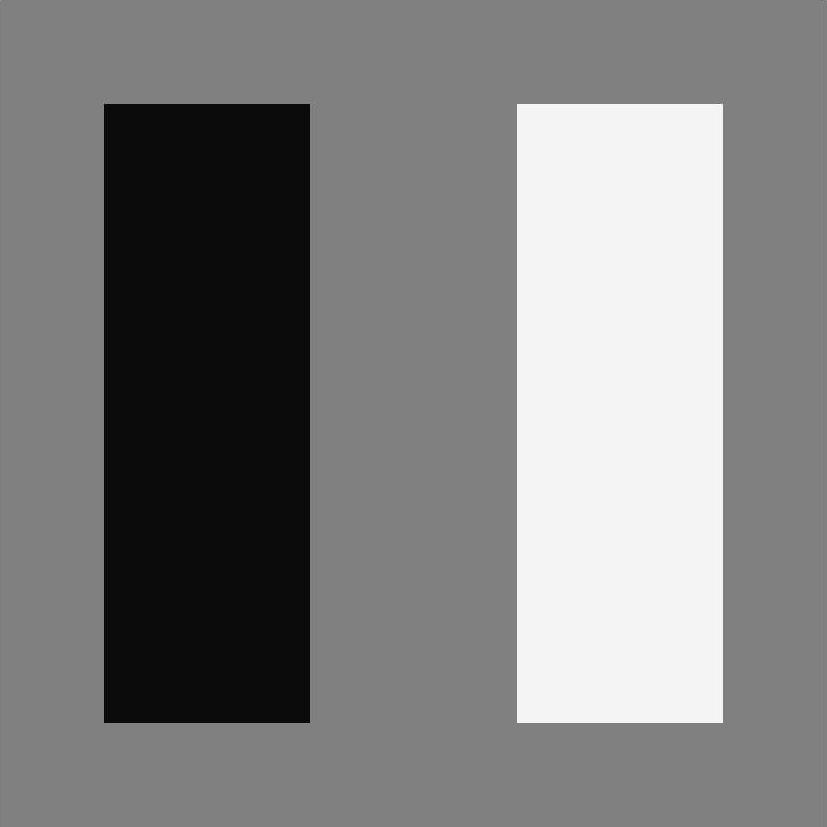} \hspace{0.1cm}
\includegraphics[width=4.0cm]{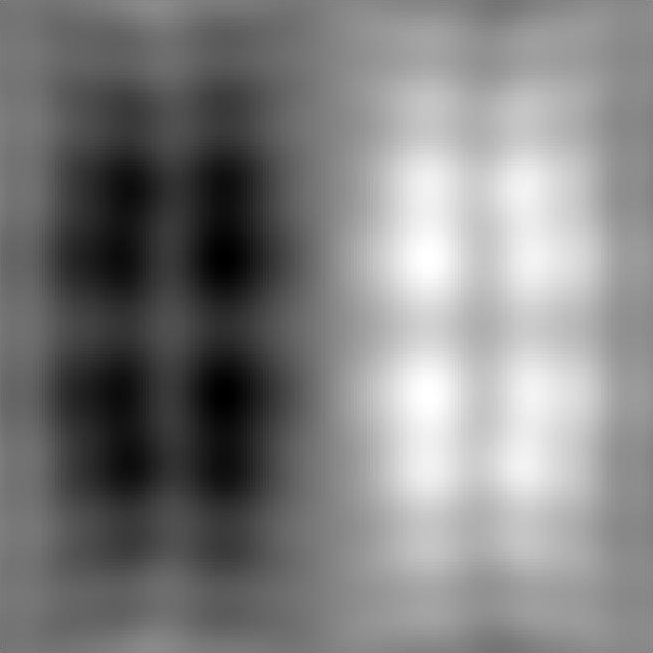} \\
\vspace{0.25cm}
\includegraphics[width=4.0cm]{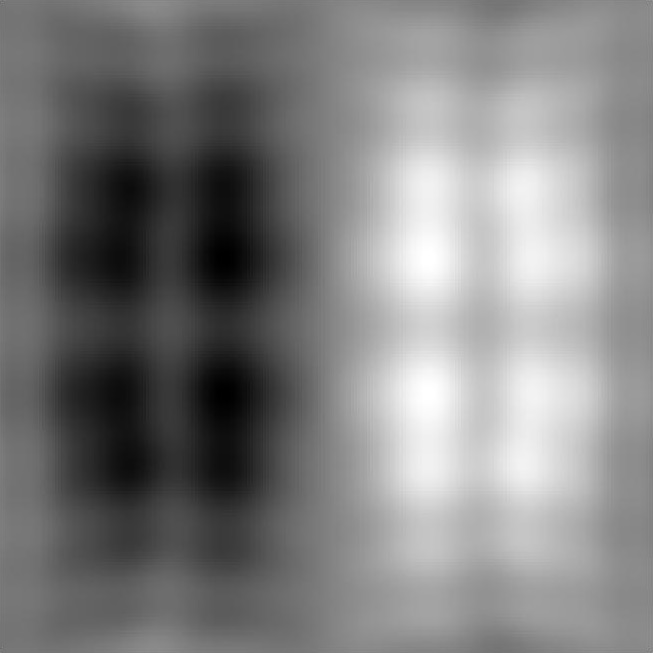} \hspace{0.1cm}
\includegraphics[width=4.0cm]{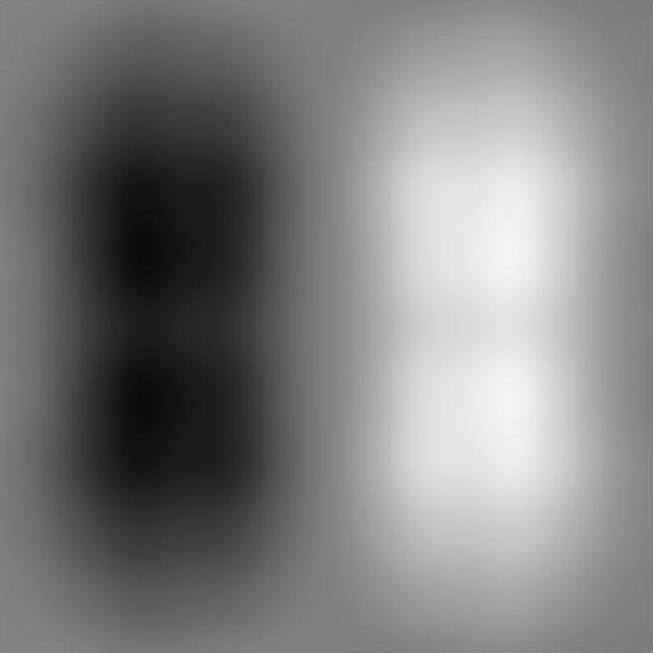} \hspace{0.1cm}
\includegraphics[width=4.0cm]{TS_k=10_angular.jpg} \\
\vspace{-0.1cm}
\caption{\small Rectangles $\lambda = 10$. Top: potential, standard main term. Bottom: mollifier, angular, combined.}
\label{TS_phantom}
\end{figure}

\begin{figure}
\centering
\includegraphics[width=4.0cm]{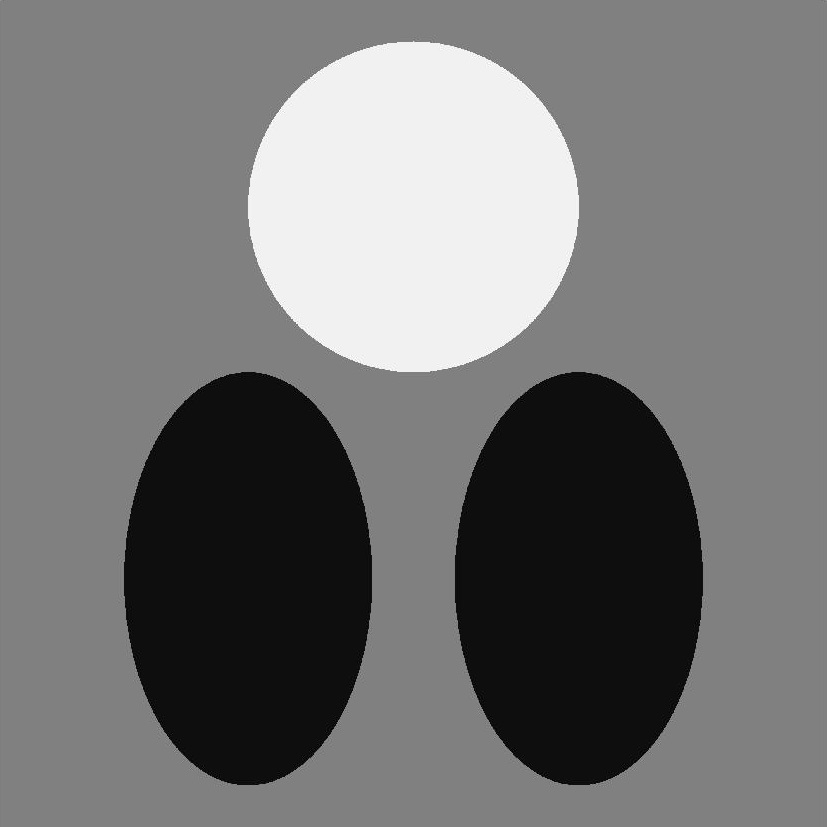} \hspace{0.1cm}
\includegraphics[width=4.0cm]{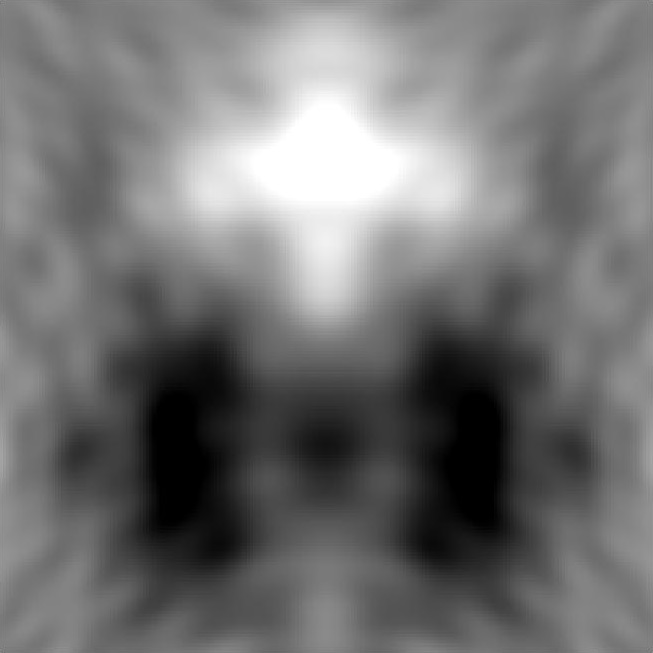} \\
\vspace{0.25cm}
\includegraphics[width=4.0cm]{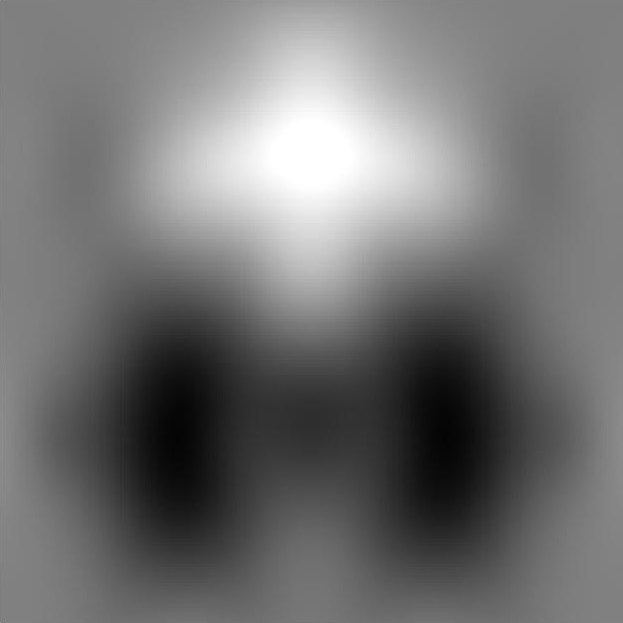} \hspace{0.1cm}
\includegraphics[width=4.0cm]{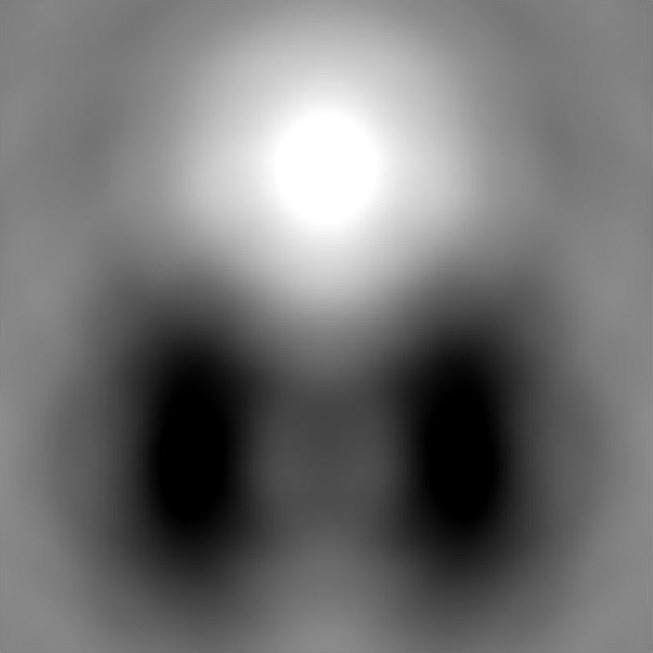}  \hspace{0.1cm}
\includegraphics[width=4.0cm]{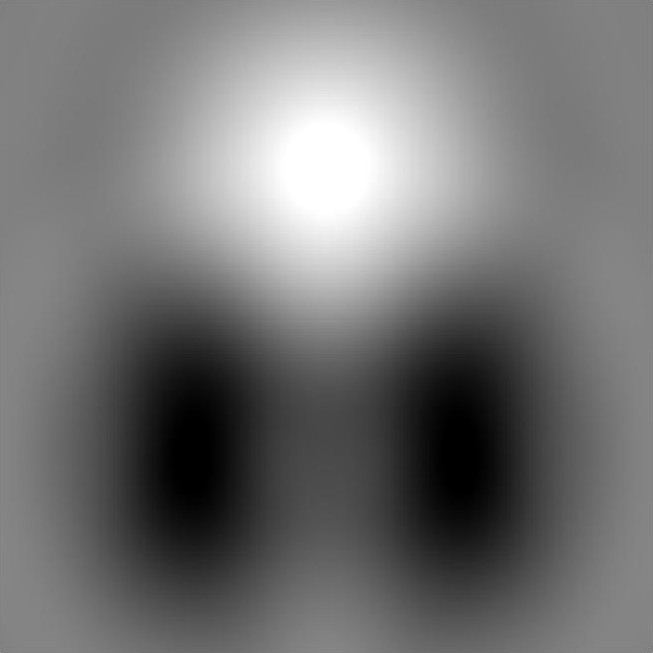} \\
\vspace{-0.1cm}
\caption{\small Ovals $\lambda = 15$. Top: potential, standard main term. Bottom: mollifier, angular, combined.}
\label{HL_phantom}
\end{figure}

\begin{figure}
\centering
\includegraphics[width=4.0cm]{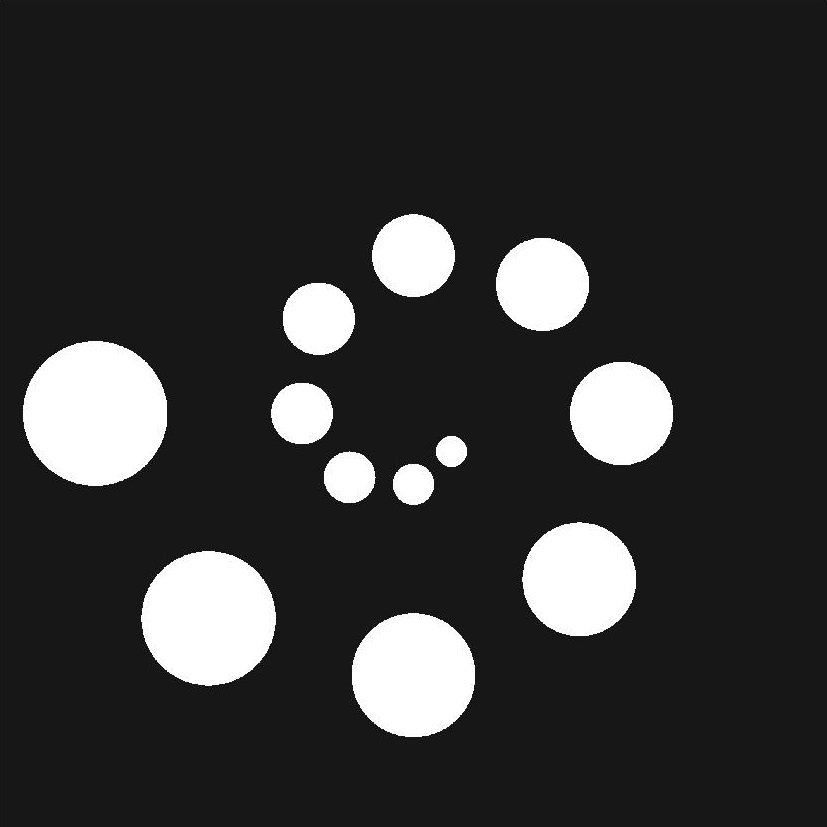} \hspace{0.1cm}
\includegraphics[width=4.0cm]{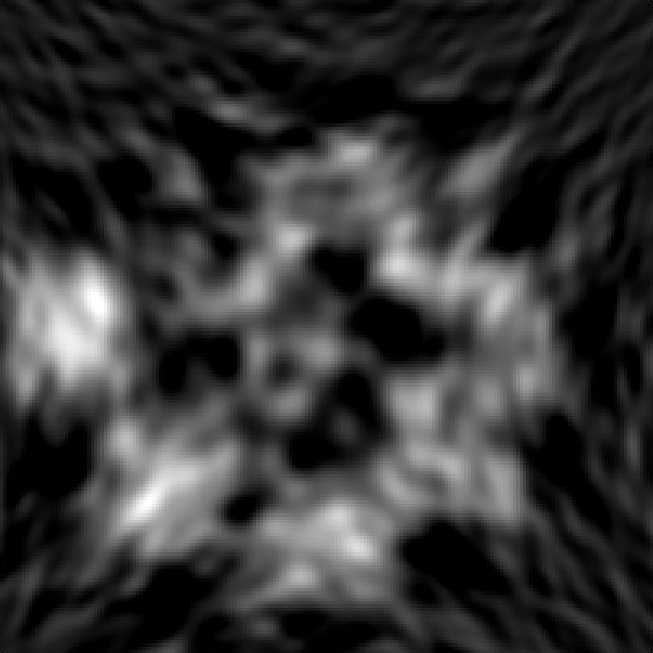} \\
\vspace{0.25cm}
\includegraphics[width=4.0cm]{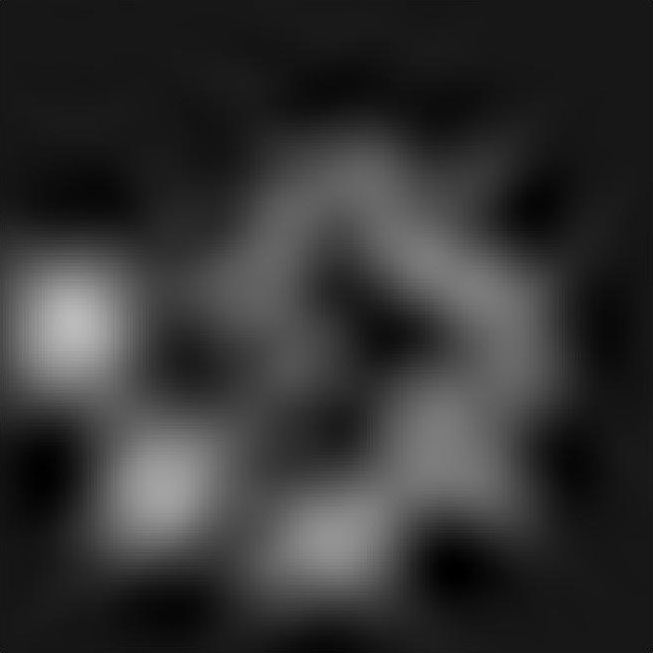} \hspace{0.1cm}
\includegraphics[width=4.0cm]{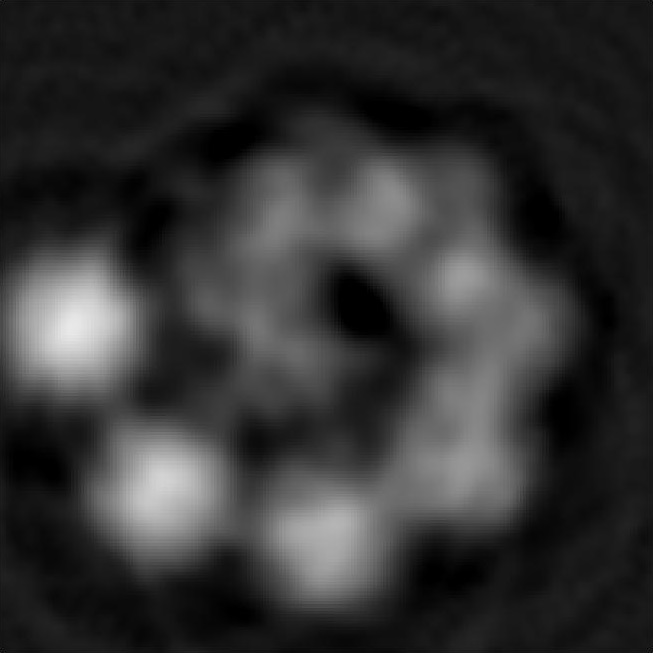}  \hspace{0.1cm}
\includegraphics[width=4.0cm]{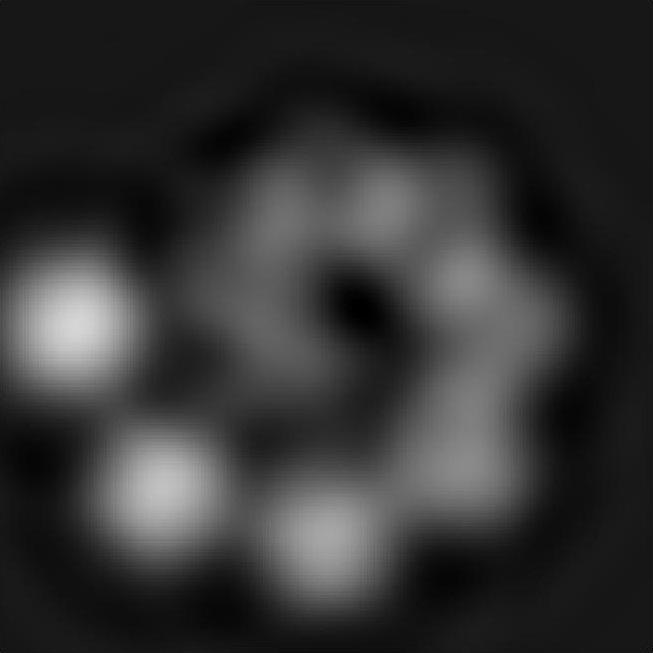} \\
\vspace{-0.1cm}
\caption{\small Circles spiral $\lambda = 30$. Top: potential, standard main term. Bottom: mollifier, angular, combined.}
\label{CI_phantom}
\end{figure}

\begin{figure}
\centering
\includegraphics[width=4.0cm]{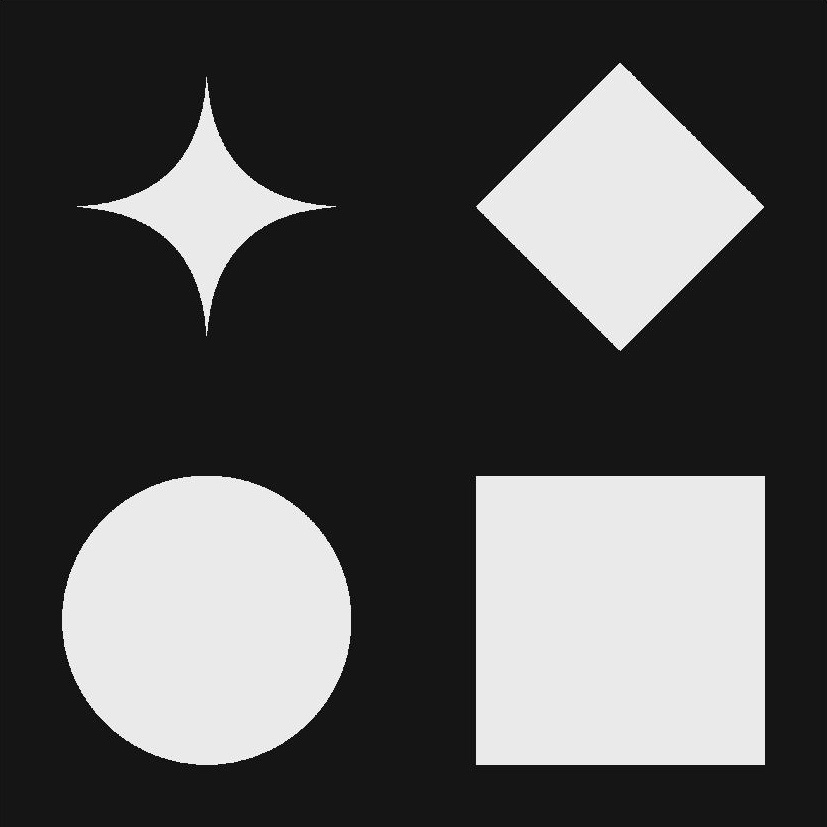} \hspace{0.1cm}
\includegraphics[width=4.0cm]{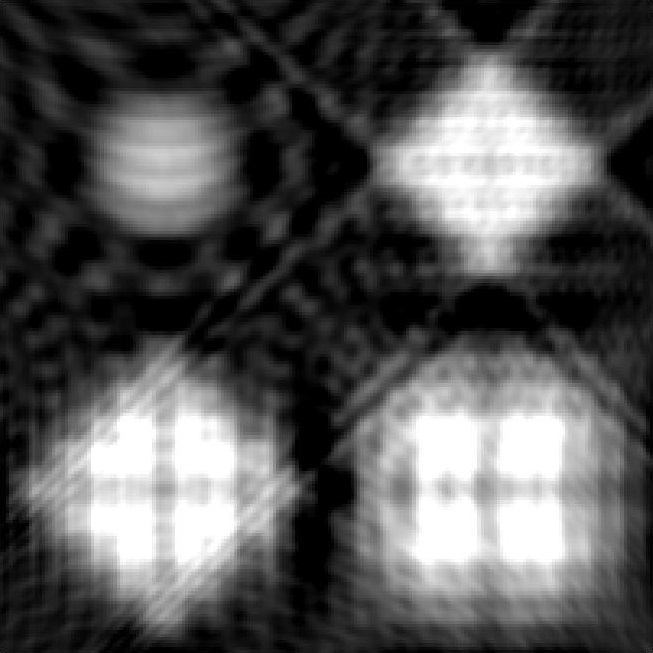} \\
\vspace{0.25cm}
\includegraphics[width=4.0cm]{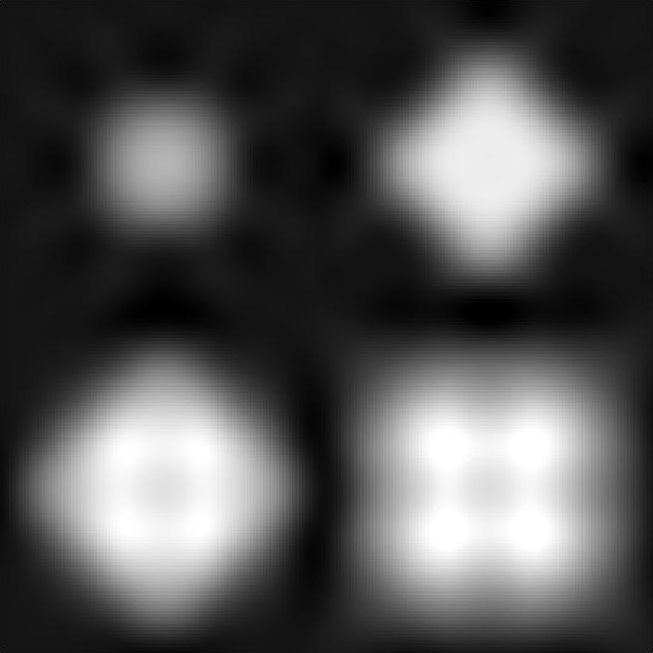} \hspace{0.1cm}
\includegraphics[width=4.0cm]{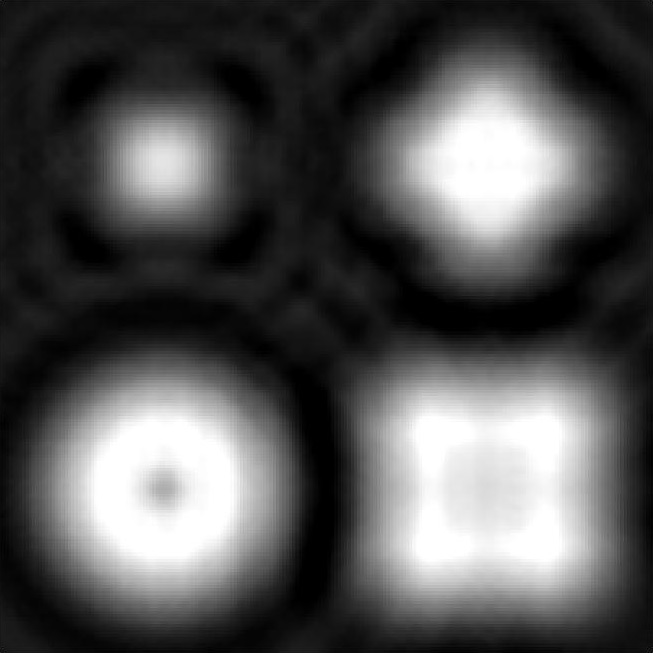}  \hspace{0.1cm}
\includegraphics[width=4.0cm]{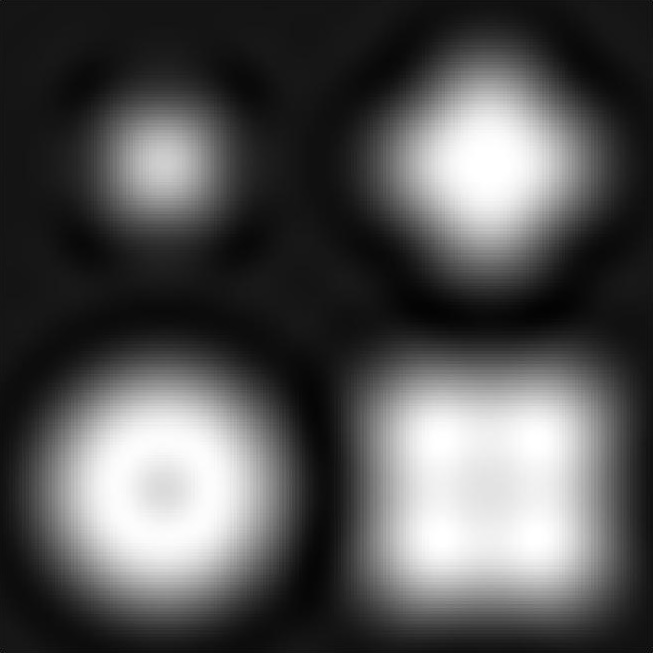} \\
\vspace{-0.1cm}
\caption{\small Geometric figures $\lambda = 50$. Top: potential, standard main term. Bottom: mollifier, angular, combined.}
\label{ME_phantom}
\vspace{12pt}
\end{figure}

\begin{figure}
\centering
\includegraphics[width=4.0cm]{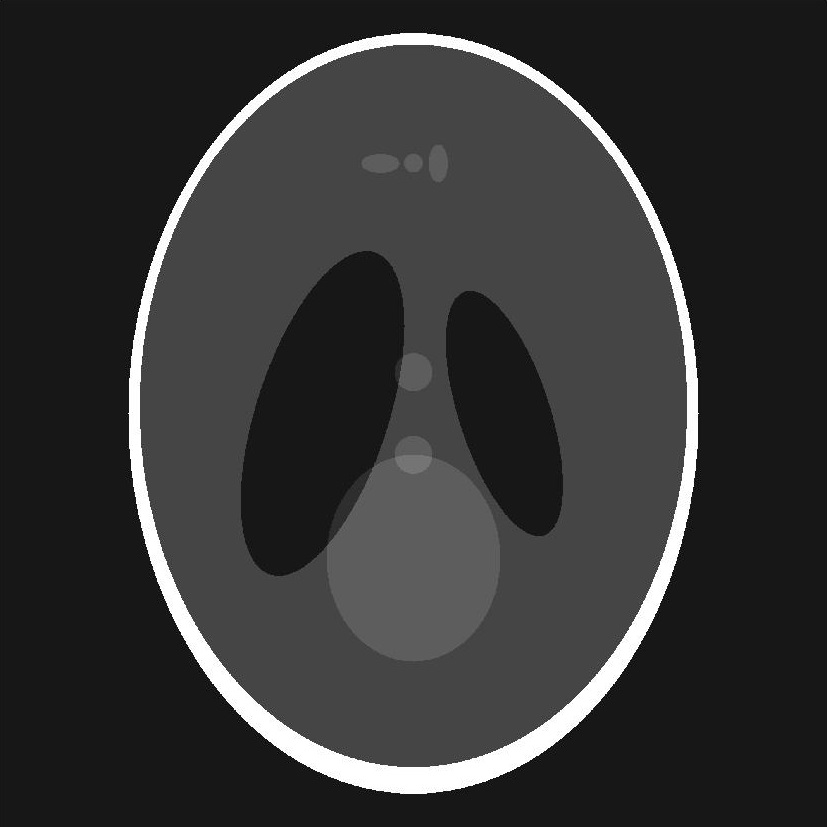} \hspace{0.1cm}
\includegraphics[width=4.0cm]{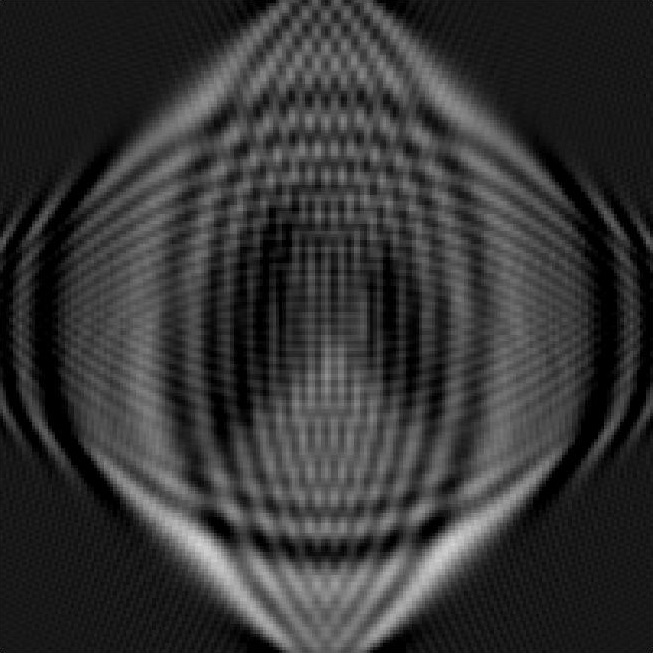} \\
\vspace{0.25cm}
\includegraphics[width=4.0cm]{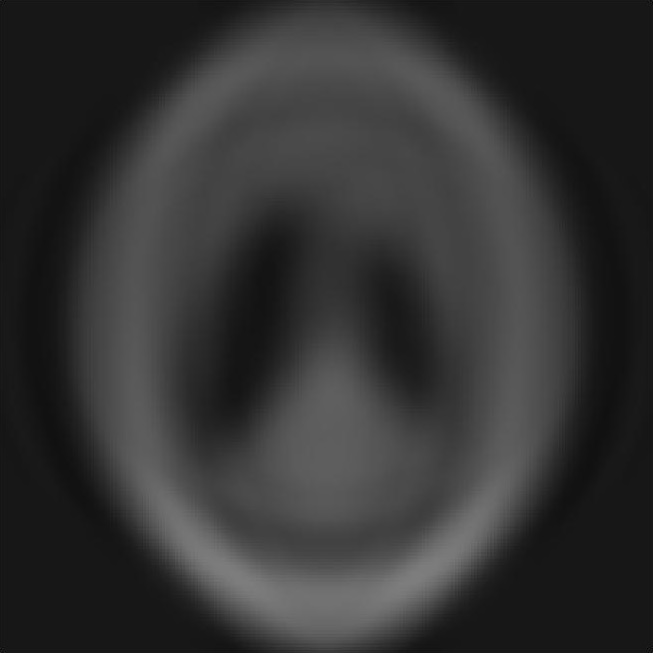} \hspace{0.1cm}
\includegraphics[width=4.0cm]{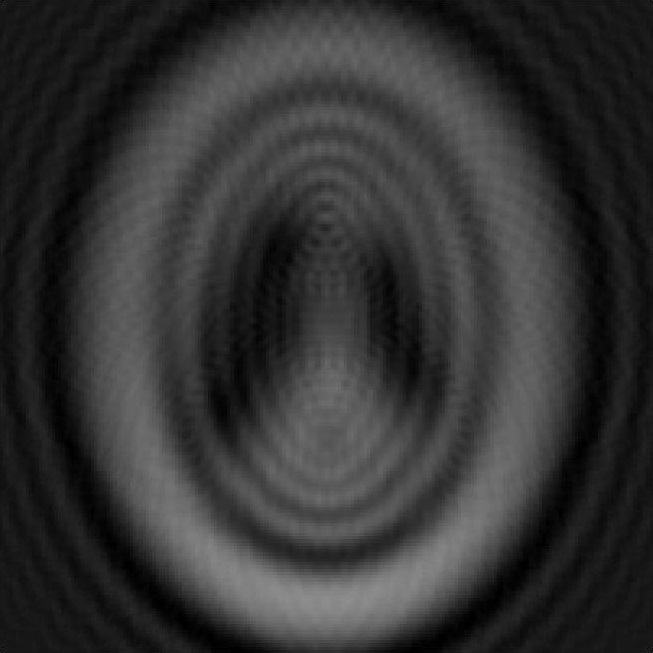}  \hspace{0.1cm}
\includegraphics[width=4.0cm]{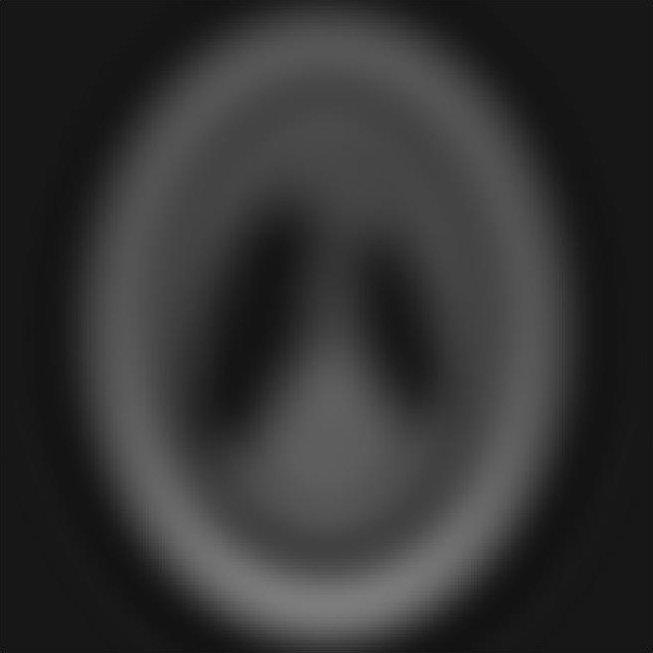} \\
\vspace{-0.1cm}
\caption{\small Shepp-Logan phantom $\lambda = 100$. Top: potential, standard main term. Bottom: mollifier, angular, combined.}
\label{SL_phantom}
\end{figure}

\begin{figure}
\renewcommand{\arraystretch}{1.15}
\centering
\begin{tabular}{|l|c|ccc|}
\hline
& $\lambda$ & Mollifier & Angular & Combined \\
\hline
Rectangles          & $10$  & $0\%$    & $17\%$ & $17\%$ \\
Rectangles          & $15$  & $1\%$    & $16\%$ & $16\%$ \\
Rectangles          & $20$  & $12\%$   & $22\%$ & $24\%$ \\
Rectangles          & $30$  & $18\%$   & $27\%$ & $28\%$ \\
\hline
Ovals               & $15$  & $9\%$    & $13\%$ & $15\%$ \\
Ovals               & $20$  & $13\%$   & $17\%$ & $18\%$ \\
Ovals               & $30$  & $12\%$   & $17\%$ & $19\%$ \\
\hline
Circles spiral      & $20$  & $23\%$   & $20\%$ & $26\%$ \\
Circles spiral      & $30$  & $20\%$   & $23\%$ & $24\%$ \\
Circles spiral      & $50$  & $21\%$   & $24\%$ & $26\%$ \\
\hline
Geometric figures   & $20$  & $7\%$    & $14\%$ & $14\%$ \\
Geometric figures   & $30$  & $11\%$   & $13\%$ & $15\%$ \\
Geometric figures   & $50$  & $18\%$   & $18\%$ & $21\%$ \\
\hline
Shepp-Logan phantom & $50$  & $30\%$   & $19\%$ & $30\%$ \\
Shepp-Logan phantom & $75$  & $31\%$   & $18\%$ & $32\%$ \\
Shepp-Logan phantom & $100$ & $33\%$   & $20\%$ & $34\%$ \\
\hline
\end{tabular}
\caption{\small Error reduction in the $L^1$ norm.}
\label{errorTable}
\end{figure}

In Figure \ref{errorTable} we can compare the error reductions in the $L^1$ norm for the mollifier average, the angular average and the combination of the angular and the mollifier average. The table contains the reduction for the examples in Figures \ref{TS_phantom} to \ref{SL_phantom} and for the same potentials with other choices of the frequency parameter $\lambda$; we see that results are stable in this sense. It is interesting that the effect of the averaging procedures seems to be more pronounced as the frequency increases. 
\\

\noindent\textbf{Acknowledgements:} The author is thankful to his PhD advisors Daniel Faraco and Keith M. Rogers for their comments and suggestions, and to Victor Arnaiz, \'Angel Castro and Javier Ramos for useful discussions. This work was partially supported by the ERC grant 301179 and the MINECO grants MTM2014-57769-1-P, SEV-2015-0554 and MTM2017-85934-P (Spain).

\newpage

\end{document}